\documentclass[preprint,11pt]{elsarticle}

\usepackage{amsmath,amsthm,amssymb,tikz,patternmacros,subcaption}
\usepackage[hidelinks]{hyperref}
\usepackage[capitalise,noabbrev]{cleveref}
\usepackage{xcolor}
\usetikzlibrary{shapes,patterns}
\hypersetup{colorlinks}

\newtheorem{thm}{Theorem}\crefname{thm}{Theorem}{Theorems}
\newtheorem{lem}[thm]{Lemma}\crefname{lem}{Lemma}{Lemmas}
\newtheorem{ex}[thm]{Example}\crefname{ex}{Example}{Examples}
\newtheorem{cor}[thm]{Corollary}\crefname{cor}{Corollary}{Corollaries}
\newtheorem{conj}[thm]{Conjecture}\crefname{conj}{Conjecture}{Conjectures}
\newtheorem{prop}[thm]{Proposition}\crefname{prop}{Proposition}{Propositions}
\newtheorem{defn}[thm]{Definition}\crefname{defn}{Definition}{Definitions}
\newtheorem{rem}[thm]{Remark}\crefname{rem}{Remark}{Remarks}
\newtheorem{que}[thm]{Question}\crefname{que}{Question}{Questions}
\crefname{clm}{Claim}{Claims}
\crefname{figure}{Figure}{Figure}

\newcommand\cl[1]{#1_{cl}}
\newcommand\sh[1]{#1_{sh}}
\newcommand\occX[2]{\eta^{#2}_{#1}}
\newcommand\mX[2]{#1^{-}_{#2}}
\DeclareMathOperator{\muP}{\mu_\mathcal{P}}
\DeclareMathOperator{\muM}{\mu_\mathcal{M}}

\numberwithin{equation}{section}
\numberwithin{figure}{section}
\numberwithin{thm}{section}

\begin{document}
\begin{frontmatter}
\title{The Poset of Mesh Patterns}
\author{Jason P. Smith\fnref{1}}
\fntext[1]{This research was supported by the EPSRC Grant EP/M027147/1}
\author{Henning Ulfarsson\fnref{2}}
\fntext[2]{Research partially supported by grant 141761-051 from the Icelandic Research Fund}
\begin{abstract}
	We introduce the poset of mesh patterns, which generalises the permutation pattern poset.
	We fully classify the mesh patterns for which the interval~$[1^\emptyset,m]$ is non-pure,
	where $1^\emptyset$ is the unshaded singleton mesh
	pattern. We present some results on the M\"obius function of the poset, and show
	that~$\mu(1^\emptyset,m)$ is almost always zero. Finally, we introduce a class
	of disconnected and non-shellable intervals by generalising the direct product operation
	from permutations to mesh patterns.
	\end{abstract}
\end{frontmatter}

\section{Introduction}
Mesh patterns were first introduced by Br\"and\'en and Claesson in \cite{Bra11} as a generalisation
of permutation patterns, and have been studied extensively in recent years, see e.g.,~\cite{CTU15,JKR15}.
A mesh pattern consist of a pair $(\pi,P)$, where $\pi$ is a permutation and $P$ is a set of coordinates
in a square grid. For example, $(312,\{(0,0),(1,2)\})$ is a mesh
pattern, which we depict by
\begin{center}
\patt{0.5}{3}{3,1,2}[0/0,1/2][][][][][4].
\end{center}

A natural definition of when one mesh patterns occurs in another mesh patterns was given in~\cite{TU17},
which we present in \cref{sec:PosMP}.
This allows us to generalise the classical permutation poset to a poset of mesh patterns, where
$(\sigma,S)\le(\pi,P)$ if there is an occurrence of $(\sigma,S)$ in~$(\pi,P)$. The permutation poset
has received a lot of attention in recent years, but due to its
complicated structure a full understanding of it has proven elusive,
see~\cite{McSt13,Smith15}. The poset of mesh patterns, which we define
here, contains the poset of permutations as an induced subposet. Therefore, investigating
the poset of mesh patterns may lead to a better understanding of the poset of permutations. Moreover, 
studying this poset may help to answer some of the open questions on mesh patterns.

In \cref{sec:PosMP} we introduce the poset of mesh patterns and related definitions,
including a brief overview of poset topology. In \cref{sec:MF} we prove some results
on the M\"obius function of this poset. In \cref{sec:purity} we give
a characterisation of the non-pure (or non-ranked) intervals of the poset.
In \cref{sec:topology} we give some results on the topology of the poset.

\section{The Poset of Mesh Patterns}\label{sec:PosMP}
To define a mesh pattern we begin with a permutation
$\pi=\pi_1\pi_2\ldots\pi_n$. We can plot $\pi$ on an $n\times n$ grid,
where we place a dot at coordinates $(i,\pi_i)$, for all $1\le i\le n$.
A \emph{mesh pattern} is then obtained by shading some of the boxes of this grid, so a mesh
pattern takes the form $p=(\cl{p},\sh{p})$, where $\cl{p}$ is a permutation
and $\sh{p}$ is a set of coordinates recording the shaded boxes, which are indexed
by their south west corner. For ease of notation we sometimes denote the mesh
pattern $(\cl{p},\sh{p})$ as $\cl{p}^{\sh{p}}$. We let~$|\cl{p}|$ represent the length of $\cl{p}$
and $|\sh{p}|$ the size of $\sh{p}$, and define the \emph{length} of $p$ as $|\cl{p}|$, which we denote $|p|$.
For example, the mesh pattern $(132,\{(0,0),(0,1),(2,2)\})$,
or equivalently $132^{(0,0),(0,1),(2,2)}$, has the form:
\begin{center}
\patt{0.5}{3}{1,3,2}[0/1,0/0,2/2][][][][][4]
\end{center}

To define when a mesh pattern occurs within another mesh pattern, we first need to
recall two other well-known definitions of occurrence. A permutation $\sigma$
\emph{occurs} in a permutation $\pi$ if there is a subsequence, $\eta$, of $\pi$ whose letters
appear in the same relative order of size as the letters of $\sigma$. The subsequence
$\eta$ is called an \emph{occurrence} of $\sigma$ in $\pi$. If no such occurrence exists
we say that $\pi$ \emph{avoids} $\sigma$.

Consider a mesh pattern $(\sigma,S)$ and an occurrence $\eta$ of $\sigma$ in $\pi$, in the
classical permutation pattern sense. Each box $(i,j)$ of $S$ corresponds to an
area $R_{\eta}(i,j)$ in the plot of $\pi$, which is the rectangle whose corners are the points in $\pi$
which in $\eta$ correspond to the letters $\sigma_i,\sigma_{i+1},j,j+1$ of $\sigma$, and the letters $\sigma_0,\sigma_{|\sigma|+1},0$ and $|\sigma|+1$ are to the south, north, east and west boundaries, respectively.
A point is contained in $R_\eta(i,j)$ if it is in the interior of $R_\eta(i,j)$, that is, not on the boundary.
For example, in \cref{fig:occEx} where $\eta$ is the occurence in red, the area
of $R_\eta(0,0)$ contains the boxes $\{(0,0),(1,0),(0,1),(1,1)\}$, and it contains exactly one point.
We say that $\eta$ is an occurrence of the mesh pattern $(\sigma,S)$ in the permutation~$\pi$
if there is no point in $R_{\eta}(i,j)$, for all shaded boxes~$(i,j)\in S$.

Using these definitions of occurrences we can recall a concept of mesh
pattern containment in another mesh pattern introduced in \cite{TU17}.
An example of which is given in \cref{fig:occEx}.

\begin{defn}[\cite{TU17}]\label{defn:meshOcc}
An occurrence of a mesh pattern $(\sigma,S)$ in another mesh
pattern $(\pi,P)$ is an occurrence~$\eta$ of~$(\sigma,S)$ in $\pi$, where for any $(i,j)\in S$
every box in $R_\eta(i,j)$ is shaded in $(\pi,P)$.
\end{defn}

\begin{figure}
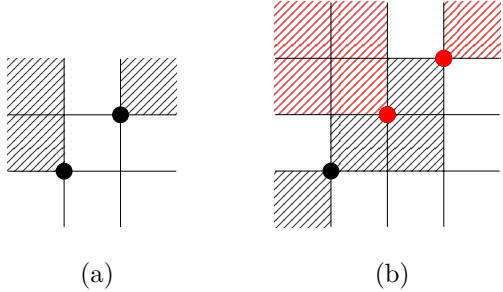
\centering
\begin{subfigure}[b]{0.3\textwidth}
\centering\patt{0.75}{2}{1,2}[0/1,0/2,2/2][][][][]
\caption{}\label{subfiga}\end{subfigure}
%\begin{subfigure}[b]{0.3\textwidth}
%\centering\patt{0.75}{3}{1,2,3}[0/0,0/2,0/3,1/3,1/1,1/2,2/1,2/2,3/3][][][][]
%\caption{}\label{subfigb}\end{subfigure}
\begin{subfigure}[b]{0.3\textwidth}\centering
\colpatt{0.75}{3}{1,2,3}[0/0,0/2,0/3,1/3,1/1,1/2,2/1,2/2,3/3][2/2,3/3][3/3,0/2,0/3,1/3,1/2]
\caption{}\label{subfigc}\end{subfigure}
\caption{A pair of mesh patterns, with an occurrence of (a) in~(b) depicted in red.
 %Also note (b) does not contain $12^{(0,0),(1,1),(2,2)}$.
 }\label{fig:occEx}
\end{figure}

The classical permutation poset $\mathcal{P}$ is defined as the poset of all permutations,
with $\sigma\le_\mathcal{P}\pi$ if and only if $\sigma$ occurs in $\pi$. Using \cref{defn:meshOcc}
we can similarly define the mesh pattern poset $\mathcal{M}$ as the poset of all mesh patterns,
with $m\le_\mathcal{M} p$ if $m$ occurs in $p$. We drop the subscripts from $\le$ when it is clear
which partial order is being considered. An \emph{interval} $[\alpha,\beta]$ of a poset is defined as the subposet induced by the set
$\{\kappa\,|\,\alpha\le\kappa\le\beta\}$. See \cref{fig:intEx} for an example of an interval of $\mathcal{M}$.

\begin{figure}\centering
\begin{tikzpicture}
\def\y{3}
\def\x{3}
\def\s{0.5}
\node (123-123) at (0*\x,5*\y){\patt{\s}{3}{1,2,3}[0/3,1/3,2/3][][][][][4]};
\node (123-12) at (-1*\x,4*\y){\patt{\s}{3}{1,2,3}[0/3,1/3][][][][][4]};
\node (123-13) at (0*\x,4*\y){\patt{\s}{3}{1,2,3}[0/3,2/3][][][][][4]};
\node (123-23) at (1*\x,4*\y){\patt{\s}{3}{1,2,3}[1/3,2/3][][][][][4]};
\node (123-1) at (-1.5*\x,3*\y){\patt{\s}{3}{1,2,3}[0/3][][][][][4]};
\node (12-12) at (-0.5*\x,3*\y){\patt{\s}{2}{1,2}[0/2,1/2][][][][][4]};
\node (123-2) at (0.5*\x,3*\y){\patt{\s}{3}{1,2,3}[1/3][][][][][4]};
\node (123-3) at (1.5*\x,3*\y){\patt{\s}{3}{1,2,3}[2/3][][][][][4]};
\node (12-1) at (-1*\x,2*\y){\patt{\s}{2}{1,2}[0/2][][][][][4]};
\node (123-0) at (0*\x,2*\y){\patt{\s}{3}{1,2,3}[][][][][][4]};
\node (12-2) at (1*\x,2*\y){\patt{\s}{2}{1,2}[1/2][][][][][4]};
\node (12-0) at (-1*\x,1*\y){\patt{\s}{2}{1,2}[][][][][][4]};
\node (1-1) at (1*\x,1*\y){\patt{\s}{1}{1}[0/1][][][][][4]};
\node (1-0) at (0*\x,0*\y){\patt{\s}{1}{1}[][][][][][4]};
\draw (123-123) -- (123-12);\draw (123-123) -- (123-13);\draw (123-123) -- (123-23);
\draw[-] (123-123) to [bend right=15] (12-12);\draw[-] (12-12) to [bend left=15] (1-1);
\draw (123-12) -- (123-1);\draw (123-12) -- (123-2);
\draw (123-13) -- (123-1);\draw (123-13) -- (123-3);
\draw (123-23) -- (123-2);\draw (123-23) -- (123-3);
\draw (123-1) -- (123-0);\draw (123-1) -- (12-1);
\draw (123-2) -- (123-0);
\draw (123-3) -- (123-0);\draw (123-3) -- (12-2);
\draw (12-12) -- (12-1);\draw (12-12) -- (12-2);
\draw (12-1) -- (12-0);\draw (12-2) -- (12-0);
\draw (123-0) -- (12-0);\draw (12-0) -- (1-0);
\draw (1-1) -- (1-0);
\end{tikzpicture}
\caption{The interval $[1^\emptyset,123^{(0,3),(1,3),(2,3)}]$ of $\mathcal{M}$.}\label{fig:intEx}
\end{figure}
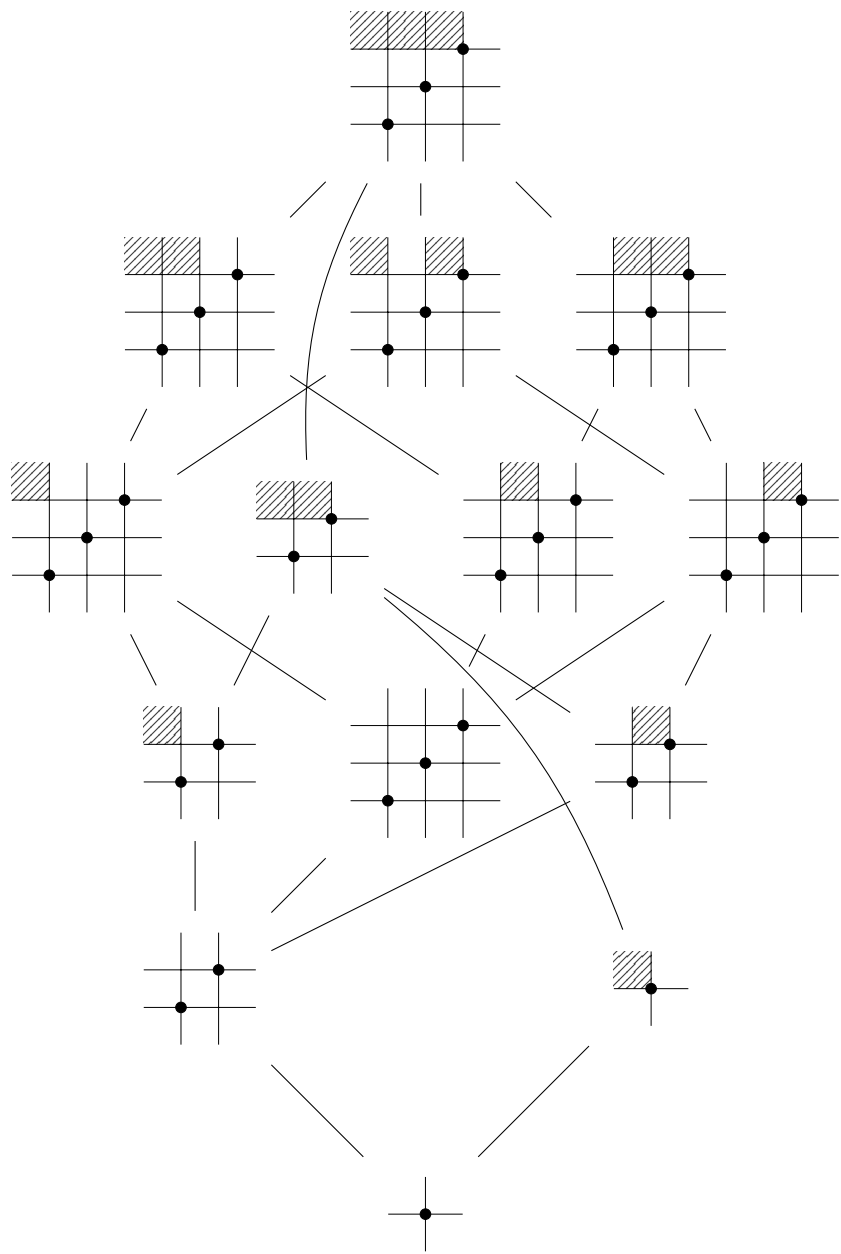

The first result on the mesh pattern poset is that there are infinitely many maximal
elements, which shows a significant difference to the permutation poset, where there are no maximal elements.

\begin{lem}\label{lem:max}
The poset of mesh pattern contains infinitely many maximal elements, which are the mesh
patterns in which all boxes are shaded.
\end{lem}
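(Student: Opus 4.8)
The plan is to prove the two assertions separately: first, that a fully shaded mesh pattern is maximal, and second, that there are infinitely many such patterns (hence infinitely many maximal elements), and finally to argue that these are the \emph{only} maximal elements.

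First I would show that if $m=(\pi,P)$ has every box of its $(|\pi|+1)\times(|\pi|+1)$ grid shaded, then $m$ is maximal. Suppose $m \le p$ for some mesh pattern $p=(\tau,T)$; I want to conclude $p=m$. By \cref{defn:meshOcc} there is an occurrence $\eta$ of $(\pi,P)$ in $\tau$ such that for every $(i,j)\in P$ every box of $R_\eta(i,j)$ is shaded in $p$. Since $P$ is \emph{all} boxes of the grid, the regions $R_\eta(i,j)$ collectively tile the entire plot of $\tau$; more precisely, every box of $\tau$'s grid lies in some $R_\eta(i,j)$, so every box of $\tau$ is shaded, i.e. $T$ is the full set of boxes. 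Moreover, because $\eta$ is a classical occurrence of $\pi$ in $\tau$ respecting the (trivially satisfiable, since all shaded) mesh conditions, but now the shading of $m$ forces that no point of $\tau$ can lie strictly inside any rectangle between consecutive points of $\eta$ — this forces $\eta$ to be all of $\tau$, so $|\tau|=|\pi|$ and $\tau=\pi$. Hence $p=m$. The main obstacle here is getting the bookkeeping of the rectangles $R_\eta(i,j)$ exactly right: I must verify that when $\eta$ omits a letter of $\tau$, that omitted point lies in the interior of some $R_\eta(i,j)$ and thus violates the condition that $R_\eta(i,j)$ be empty of points (which it must be for $\eta$ to be an occurrence of the classical pattern $\pi$ in $\tau$ with shading region $P$ that includes that box — so there is a point inside a region that ought to be point-free). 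I would state this as a short lemma: a fully shaded pattern occurs only in itself.

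Next, for the existence of infinitely many maximal elements, I simply exhibit, for each $n\ge 1$, the mesh pattern $m_n = (\mathrm{id}_n, P_n)$ where $\mathrm{id}_n = 12\cdots n$ and $P_n$ is the set of all $(n+1)^2$ boxes; by the previous paragraph each $m_n$ is maximal, and the $m_n$ are pairwise distinct since they have different lengths. That gives infinitely many maximal elements.

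Finally, for the reverse inclusion — that \emph{every} maximal element is fully shaded — I would argue by contrapositive: if $m=(\pi,P)$ has some unshaded box $(i,j)$, then I can construct a strictly larger mesh pattern $p$ covering $m$. The natural construction is to ``inflate'' the point configuration by inserting a new point into the unshaded box $(i,j)$: let $\tau$ be the permutation of length $|\pi|+1$ obtained from $\pi$ by inserting a new value between heights $j$ and $j+1$ at horizontal position between $i$ and $i+1$, and let $T$ be the shading of the new grid that makes the original occurrence $\eta$ (the non-inserted points) an occurrence of $(\pi,P)$ in $(\tau,T)$ — concretely, shade every box of the new grid that lies inside some $R_\eta(i',j')$ with $(i',j')\in P$, and leave the rest unshaded. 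Then $m \le p := (\tau,T)$ by construction, and $p\ne m$ since $|p| > |m|$, so $m$ is not maximal. The subtle point I would need to check is that the new point does not accidentally land in a forbidden (shaded) region of the pattern we are trying to occur — but since $(i,j)$ was unshaded in $m$, the box $(i,j)$ is not in $P$, so $R_\eta(i,j)$ carries no emptiness requirement, and the insertion is legal. This direction is where I expect the real care to be needed, in verifying that $T$ as defined is well-posed and that $\eta$ genuinely witnesses $m\le p$.
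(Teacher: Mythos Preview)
Your proof is correct and covers the same ground as the paper's (very terse) proof, which simply asserts that a fully shaded mesh pattern occurs only in itself; you supply the details of that assertion carefully and also prove the converse direction explicitly, which the paper leaves implicit. One minor simplification: for the ``reverse inclusion'' you insert a new point into an unshaded box, but it is easier just to shade that box---if $(i,j)\notin P$ then $m=(\pi,P)<(\pi,P\cup\{(i,j)\})$ via the identity occurrence, so $m$ is not maximal.
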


\begin{proof}
This follows from the easily proven fact that a fully shaded mesh pattern occurs only
in itself, and in no other mesh patterns.
\end{proof}

\subsection{Poset Topology}
In this subsection we briefly introduce some poset topology, and refer the reader to \cite{Wac07}
for a comprehensive overview of the topic, including any definitions we omit here.

The \emph{M\"obius function} of an interval $[\alpha,\beta]$ of a poset is defined by:\linebreak
${\mu(a,a)=1}$, for all $a$, $\mu(a,b)=0$ if $a\not\le b$, and $$\mu(a,b)=-\sum_{c\in[a,b)}\mu(a,c).$$
See \cref{fig:1-123} for an example. The M\"obius function of a poset $P$ is given by $\mu(P)=\mu(\hat{0},\hat{1})$, where $\hat{0}$ and $\hat{1}$ are unique minimal and maximal elements which we add to $P$. 

In a poset we say that $\alpha$ \emph{covers} $\beta$, denoted $\alpha\gtrdot\beta$, if $\alpha>\beta$
and there is no $\kappa$ such that $\alpha>\kappa>\beta$. A \emph{chain} of length $k$ in a poset is
a totally ordered subset $c_1<c_2<\cdots<c_{k+1}$, and the chain is \emph{maximal} if $c_i\lessdot c_{i+1}$,
for all $1\le i \le k$. A poset is \emph{pure} (also known as \emph{ranked}) if all
maximal chains have the same length. The \emph{dimension} of a poset $P$, denoted $\dim P$, is the
length of the longest maximal chain. For example, the interval in \cref{fig:intEx} is nonpure because
there is one maximal chain of length $3$ ($\pattin{1}{1}{}\lessdot\pattin{1}{1}{0/1}\lessdot
\pattin{2}{1,2}{0/2,1/2}\lessdot\pattin{3}{1,2,3}{0/3,1/3,2/3}$), two maximal chains of length $4$
and all other maximal chains have length $5$, so the interval has dimension $5$.

The \emph{interior} of an interval $[\alpha,\beta]$ is obtained by removing $\alpha$ and $\beta$,
and is denoted $(\alpha,\beta)$. The \emph{order complex} of an interval $[\alpha,\beta]$, denoted
$\Delta(\alpha,\beta)$, is the simplicial complex whose faces are the chains of $(\alpha,\beta)$.
When we refer to the \emph{topology} of an interval we mean the topology of the order complex of the interval.

A simplicial complex is \emph{shellable} if we can order the maximal faces $F_1,\ldots,F_t$
such that the subcomplex $\left(\cup_{i=1}^{k-1}F_i\right)\cap F_k$ is pure and
$(\dim F_k)$-dimensional, for all $k=2,\ldots,t$. Being shellable implies other properties
on the topology, such as having the homotopy type of a wedge of spheres.

An interval $I$ is \emph{disconnected} if the interior can be split into two disjoint pairwise incomparable sets,
that is, $I=A\cup B$ with $A\cap B=\emptyset$ and for every~$a\in A$
and $b\in B$ we have $a\not\le b$ and $b\not\le a$.
Each interval $I$ can be decomposed into its smallest connected parts, which we call the \emph{components} of $I$.
A component is \emph{nontrivial} if it contains more than one element
and we say an interval is \emph{strongly disconnected} if it has at least two nontrivial components.
For example, the interval $[1^\emptyset,12^{(0,2),(1,2)}]$ in \cref{fig:intEx} is disconnected
but not strongly disconnected. Note that if an interval has dimension less than $3$ it can
never be strongly disconnected.

We can use disconnectivity as a test for shellability using the following results.

\begin{lem}\label{lem:strongdis}
If an interval is strongly disconnected, then it is not shellable.
\begin{proof}
Consider any ordering of the maximal chains and let $F_k$, with ${k>1}$, be the first chain where
every preceding chain belongs to a different component and $F_k$ belongs to a
nontrivial component. Note that such an $F_k$ exists in every ordering because the interval
is strongly disconnected, and because $F_k$ belongs to a nontrivial component it must have dimension
of at least $1$. So $\left(\cup_{i=1}^{k-1}F_i\right)\cap F_k=\emptyset$,
which has dimension $-1$, so it is not $\dim(F_k-1)$-dimensional. Therefore, the ordering is not a
shelling.
\end{proof}
\end{lem}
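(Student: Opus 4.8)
The plan is to argue by contradiction. Suppose an interval $[\alpha,\beta]$ is strongly disconnected and yet some ordering $F_1,\dots,F_t$ of the maximal faces of $\Delta(\alpha,\beta)$ is a shelling. The maximal faces are precisely the maximal chains of the interior $(\alpha,\beta)$. The single fact I would extract from the notion of \emph{component} is this: distinct components are vertex-disjoint and pairwise incomparable, so every maximal chain lies entirely inside one component, and two maximal chains belonging to different components are disjoint as faces. I would also note that a maximal chain lying in a nontrivial component has dimension at least $1$: its underlying element (if it were a single point) would otherwise be incomparable to everything else in $(\alpha,\beta)$ and hence form its own trivial component.

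First I would locate an index at which the shelling condition must fail. Since the interval is strongly disconnected there are at least two nontrivial components $C$ and $C'$; let $j$ and $j'$ be the positions in the ordering at which the first chain lying in $C$, respectively $C'$, occurs, relabelled so that $j<j'$. Then $F_{j'}$ lies in the nontrivial component $C'$, so $\dim F_{j'}\ge 1$, and since $1\le j<j'$ we have $j'\ge 2$, so the shelling condition is required at $k=j'$. Crucially, because $F_{j'}$ is the \emph{first} chain in $C'$, every earlier chain $F_1,\dots,F_{j'-1}$ belongs to a component other than $C'$.

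Then I would compute the relevant intersection. By the vertex-disjointness of distinct components, each $F_i$ with $i<j'$ is disjoint from $F_{j'}$, so $\bigl(\bigcup_{i=1}^{j'-1}F_i\bigr)\cap F_{j'}$ contains no vertex and is therefore the $(-1)$-dimensional (void) complex. But a shelling requires this intersection to be pure of dimension $\dim F_{j'}-1\ge 0$, a contradiction. As the ordering was arbitrary, no shelling exists, and by the earlier remarks on shellability this also rules out the stronger topological conclusions.

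The main obstacle is purely combinatorial bookkeeping: one must be careful to pick the index $k$ so that \emph{all} of its predecessors lie in other components — it is not enough that $F_k$ merely lies in a nontrivial component, since earlier chains of the same component would make the intersection nonempty rather than void. Choosing the first chain of the ``second'' nontrivial component to appear is what guarantees this, and it is also what forces $k>1$ so that the shelling condition genuinely applies. Everything else is immediate from the definitions of component and of shelling.
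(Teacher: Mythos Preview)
Your argument is correct and follows essentially the same route as the paper's proof: pick an index $k>1$ at which the $k$-th maximal chain lies in a nontrivial component while all earlier chains lie in other components, observe that this forces $\bigl(\bigcup_{i<k}F_i\bigr)\cap F_k=\emptyset$ and $\dim F_k\ge 1$, and conclude that the shelling condition fails. Your write-up is a bit more explicit about \emph{how} to locate such a $k$ (taking the later of the two ``first appearances'' of chains from two nontrivial components) and about why a maximal chain in a nontrivial component cannot be a single vertex, but these are elaborations of the same idea rather than a different proof.
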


Since every subinterval of a shellable interval is shellable, \cite[Corollary 3.1.9]{Wac07},
we obtain the following:

\begin{cor}
An interval which contains a strongly disconnected subinterval is not shellable.
\end{cor}

Finally, we present a useful result known as the Quillen Fiber Lemma \cite{Quillen78}. Two simplicial
complexes are homotopy equivalent  if one can be obtained by deforming the other but
not breaking or creating any new ``holes", for a formal definition see \cite{Hat02}.
A simplicial complex is \emph{contractable} if it is
homotopy equivalent to a point and if two posets are homotopy equivalent their
M\"obius functions are equal.  Given a poset $P$, with $p \in P$ define the upper
ideal $P_{\ge p}=\{q\in P\,|\,q\ge p\}$.

\begin{prop}\label{thm:Quil}(Quillen Fiber Lemma)
Let $\phi:P\rightarrow Q$ be an order-preserving map between posets such that for any
$x\in Q$ the complex\linebreak $\Delta(\phi^{-1}(Q_{\ge x}))$ is contractible.
Then $P$ and $Q$ are homotopy equivalent.
\end{prop}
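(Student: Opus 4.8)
The statement is Quillen's classical Fiber Lemma, so the honest plan is simply to quote \cite{Quillen78}; but if a self-contained argument were wanted, I would base everything on one elementary fact, call it the \emph{Order Homotopy Theorem}: if $f,g\colon X\to Y$ are order-preserving maps of posets with $f(x)\le g(x)$ for all $x\in X$, then $f$ and $g$ induce homotopic maps on order complexes (proved by the cylinder map $X\times\{0<1\}\to Y$), and in particular any poset with a minimum or a maximum element has contractible order complex. Only this input, and the Nerve Lemma, would be needed.

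The first move is to replace $\phi$ by a comma-type construction. Let
\[\mathcal{R}=\{(p,x)\in P\times Q\mid x\le\phi(p)\}\]
with the componentwise order, and let $\pi_P\colon\mathcal{R}\to P$ and $\pi_Q\colon\mathcal{R}\to Q$ be the two projections. The section $s\colon P\to\mathcal{R}$, $s(p)=(p,\phi(p))$, is order-preserving, satisfies $\pi_P\circ s=\mathrm{id}_P$, and $s\circ\pi_P\ge\mathrm{id}_{\mathcal{R}}$ pointwise; so by the Order Homotopy Theorem $\pi_P$ is a homotopy equivalence and $\Delta(P)\simeq\Delta(\mathcal{R})$. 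This step is purely formal and uses nothing about the fibers of $\phi$.

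Next I would feed in the hypothesis. For $x\in Q$ the preimage $\pi_Q^{-1}(Q_{\ge x})=\{(p,y)\mid x\le y\le\phi(p)\}$ carries the order-decreasing map $(p,y)\mapsto(p,x)$, which retracts it onto the subposet $\{(p,x)\mid\phi(p)\ge x\}\cong\phi^{-1}(Q_{\ge x})$; hence $\Delta(\pi_Q^{-1}(Q_{\ge x}))\simeq\Delta(\phi^{-1}(Q_{\ge x}))$, which is contractible by assumption. It then remains to deduce that $\pi_Q$ itself is a homotopy equivalence from the contractibility of all its upper fibers, and this is where the real difficulty lies: the passage from ``all upper fibers are contractible'' to ``the map is an equivalence'' is exactly the content of the Fiber Lemma. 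I would try to obtain it from the Nerve Lemma applied to the cover $\{\Delta(\pi_Q^{-1}(Q_{\ge x}))\}_{x\in Q}$ of $\Delta(\mathcal{R})$, which works cleanly whenever a finite subset of $Q$ that has an upper bound has a least one (for instance when $Q$ is a lattice), but in full generality the intersections of the cover need not be contractible and there is no avoiding the homotopy-colimit argument of \cite{Quillen78}. Assembling the pieces then gives $\Delta(P)\simeq\Delta(\mathcal{R})\simeq\Delta(Q)$, as claimed; the main obstacle, and the reason we cite rather than reprove, is precisely that last topological input.
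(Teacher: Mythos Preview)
The paper does not prove this proposition at all; it simply states the Quillen Fiber Lemma with a citation to \cite{Quillen78} and uses it as a black box. Your opening sentence already identifies this as the honest plan, so you are aligned with the paper: nothing beyond the citation is required here. The additional sketch you provide (the comma construction $\mathcal{R}$, the Order Homotopy Theorem, the Nerve Lemma) is a reasonable outline of how one might try to reprove Quillen's result, and you are candid about where the real work lies, but none of it is expected for this paper.
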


\section{M\"obius Function}\label{sec:MF}
In this section we present some results on the M\"obius function of the mesh pattern poset.
We begin with some simple results on: mesh patterns with the same underlying permutations;
the mesh patterns with no points~$\epsilon^\emptyset$ and $\epsilon^{(0,0)}$;
and mesh patterns with no shaded boxes.
Throughout the remainder of the paper we assume that $m$ and $p$ are
mesh patterns.

\begin{lem}
Let $\pi$ be a permutation. For any sets $A\subseteq B$ the interval~$[\pi^A,\pi^B]$ is isomorphic to the boolean
 lattice~$B_{|B|-|A|}$. Therefore,\linebreak ${\mu(\pi^A,\pi^B)=(-1)^{|B|-|A|}}$ and
$[\pi^A,\pi^B]$ is shellable.
\begin{proof}
The elements of $[\pi^A,\pi^B]$ are exactly the mesh patterns $\pi^C$
where $C\subseteq B\setminus A$, which implies the result.
\end{proof}
\end{lem}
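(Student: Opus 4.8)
The plan is to show directly that the elements of $[\pi^A,\pi^B]$ are exactly the mesh patterns $\pi^C$ with $A\subseteq C\subseteq B$, and that among these the induced order is inclusion of shadings; then the conclusion follows from the classical facts that the Boolean lattice $B_k$ has M\"obius function $(-1)^k$ and is shellable.

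First I would pin down the elements of the interval. For one inclusion, given $A\subseteq C\subseteq B$, the identity occurrence $\eta$ of $\pi$ in $\pi$ witnesses both $\pi^A\le\pi^C$ and $\pi^C\le\pi^B$: under $\eta$ each region $R_\eta(i,j)$ is simply the single box $(i,j)$, so the shading requirement of \cref{defn:meshOcc} reduces to "the box $(i,j)$ is shaded in the larger pattern", which holds because $A\subseteq C$ and $C\subseteq B$. For the reverse inclusion, suppose $\pi^A\le\kappa\le\pi^B$ with $\kappa=(\tau,K)$. Forgetting the shadings, $\pi$ occurs in $\tau$ and $\tau$ occurs in $\pi$ in the classical permutation sense; since classical occurrence is length-monotone and antisymmetric, this forces $\tau=\pi$. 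Because $\tau$ and $\pi$ then have equal length, the occurrences realising $\pi^A\le\kappa$ and $\kappa\le\pi^B$ are each the unique identity occurrence, and the conditions of \cref{defn:meshOcc} become precisely $A\subseteq K$ and $K\subseteq B$. Hence $\kappa=\pi^K$ with $A\subseteq K\subseteq B$.

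Next I would verify that the induced order is inclusion: for $A\subseteq C,C'\subseteq B$ one has $\pi^C\le\pi^{C'}$ if and only if $C\subseteq C'$, the forward direction by the identity-occurrence argument again and the reverse because the only occurrence of $\pi$ in $\pi$ is the identity, forcing every box of $C$ to lie in $C'$. Thus $C\mapsto C\setminus A$ is a poset isomorphism from $[\pi^A,\pi^B]$ onto the lattice of subsets of $B\setminus A$ ordered by inclusion, i.e.\ onto $B_{|B|-|A|}$. The value $\mu(\pi^A,\pi^B)=(-1)^{|B|-|A|}$ is then the standard computation $\mu(B_k)=(-1)^k$, and shellability follows from the well-known shellability of Boolean lattices; see \cite{Wac07}.

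The only step needing real care is the reverse inclusion: the appeal to antisymmetry of the permutation poset to conclude $\tau=\pi$, and then the remark that an occurrence of a permutation in another permutation of the same length is necessarily the identity, which is what collapses \cref{defn:meshOcc} to a statement purely about the shaded sets. Everything afterwards is bookkeeping together with textbook properties of Boolean lattices.
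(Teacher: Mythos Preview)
Your argument is correct and follows exactly the line of the paper's one-sentence proof: the interval consists of the mesh patterns $\pi^C$ with $A\subseteq C\subseteq B$, ordered by inclusion of shadings, hence is isomorphic to $B_{|B|-|A|}$. You have simply spelled out the details---why the underlying permutation of any $\kappa$ in the interval must be $\pi$, and why the only occurrence is then the identity---that the paper leaves implicit.
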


\begin{lem}
Consider $A\in\{\emptyset,(0,0)\}$, then:
$$\mu(\epsilon^{A},p)=\begin{cases}
1,&\mbox{ if }p=\epsilon^A \\
-1,&\mbox{ if }A=\emptyset\,\,\&\,\,|\cl{p}|+|\sh{p}|=1\\
0,&\mbox{ otherwise}
\end{cases}.$$
\begin{proof}
The first two cases are trivial. By the proof of \cref{lem:max} we know that
$\epsilon^{(0,0)}$ is not contained in any larger mesh patterns, which implies
$\mu(\epsilon^{(0,0)},p)=0$, for all $p\not=\epsilon^{(0,0)}$. If $|\cl{p}|+|\sh{p}|>1$, then
$(\epsilon^\emptyset,p)$ contains a unique minimal element $1^\emptyset$, so
$\mu(\epsilon^\emptyset,p)=0$.
\end{proof}
\end{lem}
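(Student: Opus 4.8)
The plan is to handle the three branches of the formula one at a time, each reducing to the recursive definition of $\mu$ together with two structural facts: that $\epsilon^{(0,0)}$ is a maximal element of $\mathcal{M}$, and that $1^\emptyset$ lies below every mesh pattern of positive length.

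The line $p=\epsilon^A$ is immediate from $\mu(a,a)=1$. For $A=(0,0)$ I would observe that $\epsilon^{(0,0)}$ has its only box shaded, hence is fully shaded, so by the argument already used to prove \cref{lem:max} it occurs only in itself; thus $\epsilon^{(0,0)}\not\le p$ for every $p\ne\epsilon^{(0,0)}$ and $\mu(\epsilon^{(0,0)},p)=0$. In particular this rules out the middle (``$-1$'') case whenever $A=(0,0)$, so from now on $A=\emptyset$.

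For $A=\emptyset$ the enumeration of small patterns does the work. Over the empty permutation the only available box is $(0,0)$, so the only mesh patterns with $\cl{p}=\epsilon$ are $\epsilon^\emptyset$, with $|\cl{p}|+|\sh{p}|=0$, and $\epsilon^{(0,0)}$, with $|\cl{p}|+|\sh{p}|=1$. Hence $|\cl{p}|+|\sh{p}|=1$ together with $p\ne\epsilon^\emptyset$ forces $p\in\{\epsilon^{(0,0)},1^\emptyset\}$, each of which covers $\epsilon^\emptyset$, so $[\epsilon^\emptyset,p]$ has exactly two elements and $\mu(\epsilon^\emptyset,p)=-1$. If instead $|\cl{p}|+|\sh{p}|>1$, then $|\cl{p}|\ge1$ by the same enumeration, so $p$ contains a point and I would show that $1^\emptyset$ is a minimum of the open interval $(\epsilon^\emptyset,p)$: indeed $1^\emptyset\le p$ and $1^\emptyset\ne p$, and any $q$ with $\epsilon^\emptyset<q<p$ is neither $\epsilon^\emptyset$ nor $\epsilon^{(0,0)}$ (the latter being maximal), so $|\cl{q}|\ge1$ and thus $1^\emptyset\le q$. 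A nonempty open interval with a least element has an order complex that is a cone, hence contractible, so $\mu(\epsilon^\emptyset,p)=0$.

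I do not expect any genuine obstacle here; the only part that needs attention is the bookkeeping of tiny patterns — pinning down that $\epsilon^{(0,0)}$ and $1^\emptyset$ are precisely the atoms above $\epsilon^\emptyset$, that $\epsilon^{(0,0)}$ is an isolated maximal element, and hence exactly when the covering relation $\epsilon^\emptyset\lessdot p$ holds — since that is what separates the ``$-1$'' line from the ``$0$'' line.
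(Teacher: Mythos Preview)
Your proposal is correct and follows essentially the same route as the paper: you use that $\epsilon^{(0,0)}$ is maximal (from \cref{lem:max}) to dispose of the $A=(0,0)$ case, and that $1^\emptyset$ is the unique minimum of $(\epsilon^\emptyset,p)$ when $|\cl{p}|+|\sh{p}|>1$ to get $\mu=0$. The only cosmetic difference is that you phrase the last step topologically (cone, hence contractible) while the paper simply invokes the standard fact that a unique minimal element forces the M\"obius function to vanish; you also spell out the ``trivial'' $-1$ case more carefully than the paper does.
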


\begin{lem}
The interval $[\sigma^\emptyset,\pi^\emptyset]$ is isomorphic to
$[\sigma,\pi]$ in $\mathcal{P}$, so $$\muM(\sigma^\emptyset,\pi^\emptyset)=\muP(\sigma,\pi).$$
\end{lem}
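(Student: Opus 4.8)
The plan is to establish an explicit order isomorphism $\phi\colon[\sigma^\emptyset,\pi^\emptyset]\to[\sigma,\pi]$ given by $\phi(\tau^S)=\tau$, that is, by forgetting the shading. For this to be well-defined I would first argue that every element of the interval $[\sigma^\emptyset,\pi^\emptyset]$ has empty shading: if $\tau^S\le\pi^\emptyset$, then by \cref{defn:meshOcc} each box of $S$ must have all of its corresponding boxes shaded in $\pi^\emptyset$, but $\pi^\emptyset$ has no shaded boxes, so $S=\emptyset$ (the region $R_\eta(i,j)$ is always nonempty as a set of boxes). Hence the elements of $[\sigma^\emptyset,\pi^\emptyset]$ are precisely the patterns $\tau^\emptyset$ with $\sigma^\emptyset\le\tau^\emptyset\le\pi^\emptyset$, and $\phi$ is a bijection onto its image with inverse $\tau\mapsto\tau^\emptyset$.

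The core of the proof is then to check that the order relation matches up, i.e. that for permutations $\tau,\rho$ we have $\tau^\emptyset\le_{\mathcal M}\rho^\emptyset$ if and only if $\tau\le_{\mathcal P}\rho$. The forward direction is immediate since a mesh-pattern occurrence of $\tau^\emptyset$ in $\rho^\emptyset$ is in particular a classical occurrence of $\tau$ in $\rho$. For the reverse direction, I would observe that when $S=\emptyset$ the shading condition in \cref{defn:meshOcc} is vacuous: there are no boxes $(i,j)\in S$ to check. Therefore any classical occurrence $\eta$ of $\tau$ in $\rho$ already qualifies as an occurrence of the mesh pattern $\tau^\emptyset$ in the mesh pattern $\rho^\emptyset$. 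This gives $\tau\le_{\mathcal P}\rho\implies\tau^\emptyset\le_{\mathcal M}\rho^\emptyset$, completing the equivalence.

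Combining these two points, $\phi$ is an isomorphism of posets $[\sigma^\emptyset,\pi^\emptyset]\cong[\sigma,\pi]$, and since the M\"obius function depends only on the isomorphism type of the interval, $\muM(\sigma^\emptyset,\pi^\emptyset)=\muP(\sigma,\pi)$ follows by induction on the length of the interval (or by uniqueness of the M\"obius function of a poset).

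The only real subtlety — and the step I would be most careful about — is the first one: confirming that nothing with nonempty shading can sit below $\pi^\emptyset$. This hinges on the fact that for any shaded box $(i,j)$ of a candidate sub-pattern, the associated region $R_\eta(i,j)$ in the larger pattern always contains at least one box of the ambient grid (since $R_\eta(i,j)$ is bounded by consecutive rows and columns, possibly including the outer boundary, and is never empty as a collection of unit boxes), so requiring all those boxes to be shaded in $\pi^\emptyset$ forces a contradiction unless the sub-pattern's shading set is empty. Everything else is routine unwinding of definitions.
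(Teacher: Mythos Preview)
Your argument is correct and is precisely the elaboration the paper leaves implicit: the paper states this lemma without proof, treating it as immediate from the definitions. Your map $\tau^\emptyset\mapsto\tau$, the observation that $S=\emptyset$ is forced for any $\tau^S\le\pi^\emptyset$ (since each $R_\eta(i,j)$ contains at least one grid box), and the vacuity of the shading condition for unshaded patterns are exactly the points one would check, so there is nothing to add.
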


The M\"obius function of the classical permutation poset is known to be
unbounded \cite{Smith13}. So we get the following corollary:

\begin{cor}
The M\"obius function is unbounded on $\mathcal{M}$.
\end{cor}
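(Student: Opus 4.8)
The statement to prove is the Corollary: the Möbius function is unbounded on $\mathcal{M}$.

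The plan is straightforward given the preceding lemma. The previous lemma establishes that for permutations $\sigma \le_{\mathcal{P}} \pi$, the interval $[\sigma^\emptyset, \pi^\emptyset]$ in $\mathcal{M}$ is isomorphic to the interval $[\sigma, \pi]$ in the classical permutation poset $\mathcal{P}$, and hence $\muM(\sigma^\emptyset, \pi^\emptyset) = \muP(\sigma, \pi)$. So the Möbius function of $\mathcal{M}$, restricted to intervals whose endpoints are unshaded mesh patterns, already realizes every value taken by the Möbius function of $\mathcal{P}$.

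The second ingredient is the cited fact that $\muP$ is unbounded on $\mathcal{P}$, which is \cite{Smith13}. Concretely, this means that for every $N$ there exist permutations $\sigma, \pi$ with $\sigma \le_{\mathcal{P}} \pi$ and $|\muP(\sigma,\pi)| > N$.

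Combining these: given any $N$, pick such $\sigma,\pi$; then $\sigma^\emptyset \le_{\mathcal{M}} \pi^\emptyset$ and $|\muM(\sigma^\emptyset, \pi^\emptyset)| = |\muP(\sigma,\pi)| > N$. Hence $\muM$ takes values of arbitrarily large absolute value, i.e., it is unbounded on $\mathcal{M}$.

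There is essentially no obstacle here — the corollary is an immediate consequence of the embedding lemma and the known unboundedness result for permutations. The only thing to be slightly careful about is that "unbounded" should be read as unbounded in absolute value (equivalently, the image of $\muM$ is not contained in any finite interval $[-N,N]$), which is exactly what the permutation-poset result provides. I would write the proof in two or three sentences.

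=== PROOF PROPOSAL ===

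\begin{proof}
This is immediate from the preceding lemma together with the unboundedness of $\muP$ on $\mathcal{P}$ established in \cite{Smith13}. The plan is as follows. First, recall that the preceding lemma gives, for every pair of permutations $\sigma\le_\mathcal{P}\pi$, an interval $[\sigma^\emptyset,\pi^\emptyset]$ in $\mathcal{M}$ that is isomorphic to $[\sigma,\pi]$ in $\mathcal{P}$, whence $\muM(\sigma^\emptyset,\pi^\emptyset)=\muP(\sigma,\pi)$. Thus every value attained by $\muP$ on some interval of $\mathcal{P}$ is also attained by $\muM$ on the corresponding interval of unshaded mesh patterns. Since $\muP$ is unbounded, for any $N$ we may choose $\sigma\le_\mathcal{P}\pi$ with $|\muP(\sigma,\pi)|>N$, and then $|\muM(\sigma^\emptyset,\pi^\emptyset)|=|\muP(\sigma,\pi)|>N$. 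Hence $\muM$ takes values of arbitrarily large absolute value on $\mathcal{M}$, which is the claim.
\end{proof}

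Note the main (and only) point to be mindful of: ``unbounded'' is to be understood as unbounded in absolute value, i.e.\ the image of $\muM$ is not contained in any finite set $\{-N,\dots,N\}$; this is exactly what the cited result for $\mathcal{P}$ supplies, so no further argument is needed. There is no genuine obstacle in this corollary — it is a direct transfer of a known phenomenon for permutations through the order-embedding $\pi\mapsto\pi^\emptyset$.
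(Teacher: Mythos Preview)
Your proof is correct and follows exactly the paper's approach: the corollary is deduced immediately from the preceding lemma (that $[\sigma^\emptyset,\pi^\emptyset]\cong[\sigma,\pi]$, hence $\muM(\sigma^\emptyset,\pi^\emptyset)=\muP(\sigma,\pi)$) together with the cited unboundedness of $\muP$ from \cite{Smith13}. There is nothing to add.
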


We can also show that the M\"obius function is unbounded if we include shaded boxes.
We do this by mapping to the poset $\mathcal{W}$ of words with subword order,  that is, 
the poset made up of all words and $u\le w$ if there is a subword of $w$
that equals $u$. The map we introduce is analogous to the map in \cite[Section 2]{Smith14},
which maps certain intervals of the permutation poset to intervals of $\mathcal{W}$.
A \emph{descent} in a permutation $\pi=\pi_1\pi_2\ldots\pi_n$ is a pair of
letters $\pi_i,\pi_{i+1}$ with $\pi_{i}>\pi_{i+1}$. We call $\pi_{i+1}$ the \emph{descent bottom}.
An \emph{adjacency tail} is a letter $\pi_i$ with $\pi_i=\pi_{i-1}\pm 1$.
Let $adj(\pi)$ be the number of adjacency tails in~$\pi$.
Consider the set $\Gamma$ of mesh patterns where the permutation has exactly one descent,
the descent bottom is $1$ and we shade everything south west of~$1$. For example, the mesh pattern $2314^{(0,0),(1,0),(2,0)}$:
\begin{center}
\patt{0.4}{4}{2,3,1,4}[0/0,1/0,2/0][][][][][4].
\end{center}

\begin{lem}\label{lem:mobUn}
Consider a mesh pattern $m\in \Gamma$, then $[21^{(0,0),(1,0)},m]$ is shellable and
\[
\mu(21^{(0,0),(1,0)},m)=\begin{cases}
(-1)^{|m|}\lfloor\frac{|m|}{2}\rfloor,&\text{ if } adj(\cl{m})=0\\
(-1)^{|m|},& \text{ if } adj(\cl{m})=1 \text{ \& tail before descent}\\
0, &\text{ otherwise}
\end{cases}.
\]
\begin{proof}
First note that every mesh pattern in $[21^{(0,0),(1,0)},m]$ is also in $\Gamma$.
We define a map $f$ from $\Gamma$ to binary words in the following way. Let $b(x)$ be the
set of letters that appear before $1$ in $x\in \Gamma$. Set $\hat{f}(x)$ as the word where
the $i$th letter is $0$ if it is in $b(x)$ and $1$ otherwise, and let $f(x)$
equal $\hat{f}(x)$ with the first letter removed.
So $f(\Gamma)$ is the set of binary words with at least one~$0$. The inverse of this map
is obtained by the following procedure: 1) take a binary word $w\in f(\Gamma)$ and prepend a $1$;
2) put the positions that are $0$'s in increasing order followed by 
the positions that are $1$ in increasing order; and 4) shade everything southwest of $1$. 
So $f$ is a bijection.

It is straightforward
to check that $f$ is order preserving. So
the interval~$[21^{(0,0),(1,0)},m]$ is isomorphic to $[0,f(m)]$ in $\mathcal{W}$.
It was shown in
\cite{Bjo90} that intervals of $\mathcal{W}$
are shellable, which proves the shellability part. It was also shown that the M\"obius
function equals the number of normal
occurrences with the sign given by the dimension,
where an occurrence is \emph{normal} if in any consecutive sequence
of equal elements every
non-initial letter is part of the occurrence.
So for an occurrence of $0$ in $f(m)$ to be normal there can be no $1$ directly preceded by a $1$ and at most one
$0$ directly preceded by a $0$. If such a $0$ exists it must be the occurrence,
otherwise any $0$ can be the occurrence. In our bijection a non-initial letter 
of such a sequence maps to an adjacency tail.  Combining this with the fact that if there are no adjacency tails, then the letters before the descent must be all the even letters of which there are $\lfloor\frac{|m|}{2}\rfloor$, completes the proof.
\end{proof}
\end{lem}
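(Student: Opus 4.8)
The plan is to transport the whole computation into the poset $\mathcal{W}$ of words under subword order, where shellability of intervals and a closed formula for the M\"obius function are available from Bj\"orner~\cite{Bjo90}, and then extract the three cases from the combinatorics of length-one ``normal'' occurrences. \textbf{Step 1: the interval stays inside $\Gamma$.} First I would verify that if $21^{(0,0),(1,0)}\le x\le m$ with $m\in\Gamma$ then $x\in\Gamma$. Since $\cl{x}$ is sandwiched between $21$ and $\cl{m}$ in the classical pattern order and $\cl{m}$ has exactly one descent, $\cl{x}$ has exactly one descent; and since the shading south-west of the~$1$ in a $\Gamma$-pattern occupies only boxes in the bottom row, \cref{defn:meshOcc} leaves essentially no freedom, so the $1$ of $x$ must correspond to the $1$ of $m$, the descent bottom of $\cl{x}$ is $1$, and the shading of $x$ is forced to be exactly the boxes south-west of its $1$. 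Hence $x\in\Gamma$.

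\textbf{Step 2: the bijection $f$ and the isomorphism with $\mathcal{W}$.} I would record a $\Gamma$-pattern $x$ by the binary word $\hat f(x)$ of length $|x|$ whose $i$-th letter is $0$ if the value $i$ lies before the letter $1$ in $\cl{x}$ and $1$ otherwise, and set $f(x)$ equal to $\hat f(x)$ with its leading letter (always $1$, since $1$ is not before itself) deleted; then $f(\Gamma)$ is exactly the set of binary words containing a $0$, and the stated inverse (prepend a $1$, list the $0$-positions increasingly then the $1$-positions increasingly, shade everything south-west of $1$) is a two-sided inverse once one checks the resulting permutation has its unique descent dropping from the largest ``before'' value to $1$. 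Next I would show $x\le y$ in $\mathcal{M}$ iff $f(x)\le f(y)$ as words, for $x,y\in\Gamma$: an occurrence of $x$ in $y$ must, by the bottom-row argument of Step 1, send the $1$ of $x$ to the $1$ of $y$ and hence deletes only non-$1$ values, which is precisely a subword relation on the $f$-images, while conversely a subword relation reconstructs such an occurrence, the only shaded boxes to check being the forced south-west ones, which are automatically absorbed. Combined with Step 1 this gives $[21^{(0,0),(1,0)},m]\cong[0,f(m)]$ in $\mathcal{W}$, and shellability of the interval is then immediate from~\cite{Bjo90}.

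\textbf{Step 3: the M\"obius function.} By~\cite{Bjo90}, $\mu_{\mathcal{W}}(0,w)$ equals $(-1)^{|w|-1}$ times the number of normal occurrences of $0$ in $w=f(m)$, where an occurrence is normal if every letter that is non-initial in its maximal run of equal letters is used; a single-$0$ occurrence can be normal only if $w$ has no factor $11$ and at most one non-initial $0$ (i.e.\ at most one factor $00$, appearing once), and when such a $0$ exists it must be the chosen one. Translating through $f$, a factor $00$ corresponds to an adjacency tail among the values before the descent of $\cl{m}$ and a factor $11$ to an adjacency tail strictly after it, the adjacencies at the descent bottom itself and at the value $2$ immediately after it being handled by direct inspection and landing in the second case of the formula. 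Hence: if $adj(\cl{m})=0$ there are no repeated letters, $f(m)$ alternates, the values before the descent are exactly the $\lfloor|m|/2\rfloor$ even values, and every $0$ is a normal occurrence, giving $\pm\lfloor|m|/2\rfloor$; if $adj(\cl{m})=1$ with the tail before the descent there is exactly one factor $00$ and no $11$, hence exactly one normal occurrence; otherwise either a factor $11$ appears or there are two non-initial $0$'s, so there is no normal occurrence and $\mu=0$. Since $|f(m)|-1=|m|-2$ has the same parity as $|m|$, the signs match the statement.

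\textbf{Main obstacle.} I expect Steps 1 and the order-equivalence part of Step 2 to be the delicate part --- establishing that occurrences between $\Gamma$-patterns are forced to align the two copies of the letter~$1$ and that the only relevant shaded boxes are the forced south-west ones --- because the mesh-in-mesh occurrence relation of \cref{defn:meshOcc} is easy to misapply on boundary boxes; once the interval is identified with $[0,f(m)]$ in $\mathcal{W}$, the remainder is bookkeeping with Bj\"orner's normal-occurrence formula.
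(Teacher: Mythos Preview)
Your proposal is correct and follows essentially the same route as the paper: establish that the interval lies in $\Gamma$, use the identical bijection $f$ (record which values lie before the~$1$, drop the forced leading letter) to identify $[21^{(0,0),(1,0)},m]$ with $[0,f(m)]$ in the subword order $\mathcal{W}$, invoke Bj\"orner for shellability and the normal-embedding count, and then translate runs in $f(m)$ back to adjacencies in $\cl{m}$. Your write-up is in fact more careful than the paper's in two places: you argue both directions of the order-equivalence (the paper just declares it ``straightforward''), and you explicitly flag the boundary adjacencies involving the value~$2$ that are invisible in $f(m)$ because the leading letter of $\hat f(m)$ was deleted --- a subtlety the paper's one-line correspondence between non-initial run letters and adjacency tails does not address.
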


The M\"obius function on $\mathcal{P}$ often takes larger values than on $\mathcal{M}$,
but it is not always true that $\muM(m,p)\le
\muP(\cl{m},\cl{p})$. A simple counterexample is the interval
$$[1^{(0,1)},123^{(0,2),(0,3),(1,2),(1,3)}],$$ which has M\"obius function $1$,
however $\muP(1,123)=0$, see \cref{fig:1-123}.

\begin{figure}\centering
\begin{tikzpicture}
\def\s{0.35}
\node (123) at (0,3){\footnotesize\textcolor{white}{1}\patt{\s}{3}{1,2,3}[0/2,0/3,1/2,1/3][][][][][4]\textcolor{red}{1}};
\node (12a) at (-1,1.5){\footnotesize\textcolor{red}{-1}\patt{\s}{2}{1,2}[0/2,1/2][][][][][4]\textcolor{white}{-1}};
\node (12b) at (1,1.5){\footnotesize\textcolor{white}{-1}\patt{\s}{2}{1,2}[0/1,0/2][][][][][4]\textcolor{red}{-1}};
\node (1) at (0,0){\footnotesize\textcolor{white}{-1}\patt{\s}{1}{1}[0/1][][][][][4]\textcolor{red}{1}};
\draw (1) -- (12a) -- (123) -- (12b) -- (1);
\node (123) at (5,3){{\footnotesize\textcolor{red}{0}} $123$ \textcolor{white}{0}};
\node (12) at (5,1.5){{\footnotesize\textcolor{red}{-1}} $12$ \textcolor{white}{-1}};
\node (1) at (5,0){{\footnotesize\textcolor{red}{1}} $1$ \textcolor{white}{1}};
\draw (1) -- (12) -- (123);
\end{tikzpicture}
\caption{The interval $[1^{(0,1)},123^{(0,2),(0,3),(1,2),(1,3)}]$
(left) in $\mathcal{M}$ and $[1,123]$ (right) in $\mathcal{P}$,
with the M\"obius function in red.}\label{fig:1-123}
\end{figure}
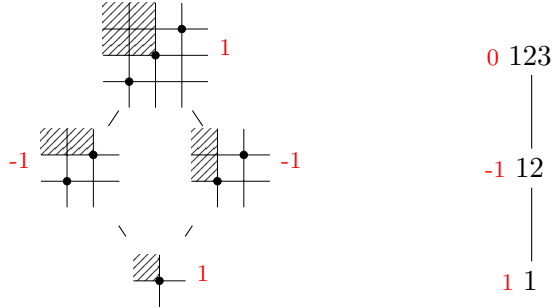

If we consider intervals where the bottom mesh pattern has no shadings, then we
get the following result:
\begin{lem}\label{lem:mu0}
Consider an interval $[s^\emptyset,p]$ in $\mathcal{M}$ with $\sh{p}\not=\emptyset$.
If $s^B\not\in(s^\emptyset,p)$ for any set $B$, then $\mu(s^\emptyset,p)=0$.
\begin{proof}
Consider the map $f:(s^\emptyset,p)\rightarrow A:x\mapsto\cl{x}^\emptyset,$ that is, $f$ removes all shadings from~$x$.
We can see that ${A=(s^\emptyset,\cl{p}^{\emptyset}]}$, so  $A$ is contractible, because it has the
unique maximal element~$\cl{p}^\emptyset$, hence $\mu(A)=0$. Moreover,~$f^{-1}(A_{\ge y})=[y,p)$,
for all $y\in A$, which is contractible. Therefore,~$(s^\emptyset,p)$ is homotopy equivalent
to $A$ by the Quillen Fiber Lemma (\cref{thm:Quil}), which implies~$\mu(s^\emptyset,p)=0$.
\end{proof}
\end{lem}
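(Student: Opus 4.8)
The plan is to apply the Quillen Fiber Lemma (\cref{thm:Quil}) to the ``forget the shading'' map $f\colon(s^\emptyset,p)\to\mathcal{M}$, $f(x)=\cl{x}^\emptyset$. This map is order-preserving: if $x\le_\mathcal{M}y$, any occurrence of $x$ in $y$ restricts to an occurrence of $\cl{x}$ in $\cl{y}$, so $\cl{x}\le_\mathcal{P}\cl{y}$. Throughout I assume $s<_\mathcal{P}\cl{p}$, so that the target poset below is non-empty and has the stated maximum.

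The first step is to identify the image $A:=f\bigl((s^\emptyset,p)\bigr)$. Two elementary facts about \cref{defn:meshOcc} drive everything. First, for an \emph{unshaded} pattern $t^\emptyset$ one has $t^\emptyset\le_\mathcal{M}q$ if and only if $t\le_\mathcal{P}\cl{q}$, since the shading condition is then vacuous. Second, a genuinely shaded pattern can never be contained in an unshaded one, because every region $R_\eta(i,j)$ contains at least one box of the host grid (its two vertical walls are distinct and its two horizontal walls are distinct, so a box column and a box row lie strictly between them), and that box would have to be shaded. Now take $x\in(s^\emptyset,p)$: then $s\le_\mathcal{P}\cl{x}\le_\mathcal{P}\cl{p}$, and $\cl{x}\neq s$, for otherwise $x=s^{\sh{x}}$ would be a pattern $s^{B}$ lying in $(s^\emptyset,p)$, contradicting the hypothesis. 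Conversely, every $t^\emptyset$ with $s<_\mathcal{P}t\le_\mathcal{P}\cl{p}$ lies in $(s^\emptyset,p)$ (the upper inequality is strict since $\sh{p}\neq\emptyset$). Combined with the second fact, this shows $A=(s^\emptyset,\cl{p}^\emptyset]$ as a subposet of $\mathcal{M}$; since $A$ has the maximum $\cl{p}^\emptyset$, its order complex is a cone, hence contractible, so $\mu(A)=0$.

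It remains to verify the fiber condition. Fix $y=t^\emptyset\in A$; using the reduction to classical containment again,
\[
f^{-1}(A_{\ge y})=\{x\in(s^\emptyset,p):t\le_\mathcal{P}\cl{x}\}=\{x\in(s^\emptyset,p):t^\emptyset\le_\mathcal{M}x\}=[t^\emptyset,p),
\]
where the requirement $x>s^\emptyset$ is automatic because $t>_\mathcal{P}s$ forces $t^\emptyset>_\mathcal{M}s^\emptyset$. This half-open interval has the minimum $t^\emptyset$, so its order complex is again a cone, hence contractible. The Quillen Fiber Lemma then gives a homotopy equivalence between $(s^\emptyset,p)$ and $A$, and since homotopy-equivalent posets have the same M\"obius function, $\mu(s^\emptyset,p)=\mu(A)=0$.

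The order-preservation of $f$ and the reduction of unshaded containment to classical containment are routine; the load-bearing step is the identification of the image. This is precisely where the hypothesis enters: ``$s^{B}\notin(s^\emptyset,p)$ for any $B$'' is exactly what removes $s^\emptyset$ from $A$, so that $A$ acquires a maximum and every fiber acquires a minimum. Without it, the fiber over $s^\emptyset$ would be the entire open interval $(s^\emptyset,p)$ — whose topology is the very thing we are trying to pin down — and the argument collapses.
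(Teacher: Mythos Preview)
Your proof is correct and follows exactly the paper's approach: apply the Quillen Fiber Lemma to the deshading map $f(x)=\cl{x}^\emptyset$, identify the image as $(s^\emptyset,\cl{p}^\emptyset]$, and check that each fiber $[t^\emptyset,p)$ is a cone. You supply more justification than the paper does (order-preservation, why the image excludes $s^\emptyset$, why shaded patterns cannot sit below unshaded ones), and your explicit side assumption $s<_\mathcal{P}\cl{p}$ is a genuine improvement---without it $A$ is empty and the conclusion $\mu=0$ can fail (e.g.\ $s=\cl{p}$ with $|\sh{p}|=1$ gives $\mu=-1$).
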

\begin{ex}
Consider the subinterval $[1^\emptyset,12^{(0,2)}]$ in \cref{fig:intEx},
applying \cref{lem:mu0} implies $\mu(1^\emptyset,12^{(0,2)})=0$. However, we cannot
apply \cref{lem:mu0} to $[1^\emptyset,12^{(0,2),(1,2)}]$ because it contains the
element $1^{(0,1)}$.
\end{ex}

We can combine Lemma~\ref{lem:mu0} with the following result to see that the
M\"obius function is almost always zero on the interval $[1^\emptyset,p]$.

\begin{lem}
As $n$ tends to infinity the proportion of mesh patterns of length~$n$ that contain any of
$\{1^{(0,0)},1^{(1,0)},1^{(0,1)},1^{(1,1)}\}$ approaches $0$.
\begin{proof}
Let $P(n,i)$ be the probability that the letter $i$ is an occurrence of~$1^{(0,0)}$
in a length $n$ mesh pattern, and let $P(n)$ be the probability
that a length $n$ mesh pattern contains $1^{(0,0)}$.

The probability $P(n,i)$ can be bounded
above by first considering the index $k$ of $i$, each having probability
$\frac{1}{n}$, and then requiring
that all boxes south west of $i$ are filled, of which there are $ik$. This
provides an upper bound, because it is possible that there is a
point south west of $i$, which would imply $i$ is not an occurrence of
$1^{(0,0)}$. We can formulate this as:
\begin{align*}
P(n,i)&\le\sum_{k=1}^{n}\frac{1}{n}\left(\frac{1}{2^i}\right)^k
=\frac{1}{n}\left(\frac{1-2^{-i(n+1)}}{1-2^{-i}}-1\right)\\
&=\frac{1}{n}\left(\frac{2^{-i}-2^{-i(n+1)}}{1-2^{-i}}\right)
=\frac{1}{n2^i}\left(\frac{1-2^{-in}}{1-2^{-i}}\right)
\le\frac{2}{n2^i}
\end{align*}

To compute the probability $P(n)$ we
can sum over all the $P(n,i)$.
Note again this is an over estimate because if a mesh pattern contains multiple
occurrences of $1^{(0,0)}$ it counts that mesh pattern more than once.

$$
P(n)\le\sum_{i=1}^{n}P(n,i)\le\sum_{i=1}^{n}\frac{2}{n2^i}
=\frac{2}{n}\left(\frac{1-\left(\frac{1}{2}\right)^{n+1}}{1-\frac{1}{2}}-1\right)
\le\frac{2}{n}
$$

Repeating this calculation for the other three shadings of $1$ implies that
the probability of containing any of the forbidden mesh patterns is bounded
by $\frac{8}{n}$ which tends to zero as $n$ tends to infinity.
\end{proof}
\end{lem}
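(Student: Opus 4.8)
The plan is to pass to a probabilistic model: a mesh pattern of length $n$ chosen uniformly at random is the same datum as a uniformly random permutation $\pi\in S_n$ together with an independent fair coin for each of the $(n+1)^2$ boxes of its grid, deciding whether that box is shaded. Containing one of the four forbidden patterns means having at least one occurrence of one of them, so a union bound reduces the problem to bounding, separately, the probability that a random length-$n$ mesh pattern contains each of $1^{(0,0)},1^{(1,0)},1^{(0,1)},1^{(1,1)}$. Moreover these four patterns lie in a single orbit under the reflective symmetries of the square (reverse and complement), and the uniform measure on mesh patterns of length $n$ is invariant under those symmetries since they are length-preserving bijections of the whole set; hence the four probabilities are equal and it suffices to treat $1^{(0,0)}$, multiplying the final bound by $4$.

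Next I would unwind the occurrence condition. An occurrence of $1^{(0,0)}$ in $(\pi,P)$ is a single point $(k,i)$ of the plot of $\pi$ (so $\pi_k=i$) whose southwest region, bounded by the point, the left edge and the bottom edge, contains no point of $\pi$ and whose boxes are all shaded in $P$; those boxes are exactly the ones with southwest corner $(a,b)$ for $0\le a\le k-1$ and $0\le b\le i-1$, so there are $ki$ of them. The key point is that the shading requirement depends only on the coins while the ``no other point'' requirement depends only on $\pi$, and these are independent. Writing $P(n,i)$ for the probability that the point of height $i$ is an occurrence of $1^{(0,0)}$, I condition on the position $k$ of the value $i$ (which is uniform on $\{1,\dots,n\}$) and then drop the ``no other point'' requirement, which can only decrease the probability, to get $P(n,i)\le\sum_{k=1}^{n}\tfrac1n 2^{-ik}$. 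Summing this geometric series in $k$ and estimating crudely yields $P(n,i)\le \tfrac{2}{n2^{i}}$ for all $n$.

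The remaining step is two more summations. A union bound over the $n$ possible heights $i$ gives that the probability a random length-$n$ mesh pattern contains $1^{(0,0)}$ is at most $\sum_{i=1}^{n}P(n,i)\le\sum_{i=1}^{n}\tfrac{2}{n2^{i}}\le\tfrac2n$ (this over-counts mesh patterns with several occurrences, which is harmless for an upper bound), and then the symmetry reduction multiplies by $4$, so the probability of containing any of the four patterns is at most $\tfrac8n$, which tends to $0$.

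I do not anticipate a serious obstacle. The only points needing care are: (i) confirming that ``uniform random permutation $\times$ independent box coins'' is exactly the uniform distribution on length-$n$ mesh patterns and that the square symmetries preserve it; (ii) computing the box count of the southwest region correctly, namely $ki$ (though any lower bound of the form $c\,ki$ would suffice); and (iii) being explicit that every estimate above is an upper bound, so that both dropping the ``no point'' condition and the double-counting in the union bounds are legitimate. Beyond that, the argument is just the elementary geometric-series estimate sketched.
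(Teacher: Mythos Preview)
Your proposal is correct and follows essentially the same route as the paper: define $P(n,i)$, condition on the position $k$ of the value $i$, drop the ``no point southwest'' constraint to obtain the upper bound $\sum_{k=1}^n \tfrac{1}{n}2^{-ik}\le \tfrac{2}{n2^i}$, sum over $i$, and then handle the remaining three patterns by symmetry to arrive at the bound $8/n$. The only differences are cosmetic---you make the probabilistic model and the symmetry reduction more explicit than the paper does.
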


Because of the previous lemma we obtain:

\begin{cor}
As $n$ tends to infinity the proportion of mesh patterns $p$ of length n such
that $\mu(1^\emptyset,p)=0$ approaches $1$.
\end{cor}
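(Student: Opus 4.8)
The plan is to read the corollary off directly by combining \cref{lem:mu0} with the lemma immediately preceding the corollary. Fix a large $n$, write $N_n=n!\,2^{(n+1)^2}$ for the total number of length-$n$ mesh patterns, and call a length-$n$ mesh pattern $p$ \emph{good} if $\sh{p}\neq\emptyset$ and $p$ avoids all four of $1^{(0,0)},1^{(1,0)},1^{(0,1)},1^{(1,1)}$. I would show two things: first, that $\mu(1^\emptyset,p)=0$ for every good $p$; and second, that the proportion of good patterns among all length-$n$ mesh patterns tends to $1$. The corollary is then immediate.

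For the first point, take a good $p$. Since $1^\emptyset$ occurs in every mesh pattern of positive length, $[1^\emptyset,p]$ is a genuine interval. Suppose $1^B\in(1^\emptyset,p)$ for some box set $B$. Then $B\neq\emptyset$, since $1^\emptyset$ is excluded from the open interval, and $1^B$ occurs in $p$. Picking any single box $(i,j)\in B$ gives $1^{(i,j)}\le 1^B$, so $1^{(i,j)}$ occurs in $p$ as well, contradicting goodness. Hence no $1^B$ lies in $(1^\emptyset,p)$, and since $\sh{p}\neq\emptyset$ we may apply \cref{lem:mu0} with $s=1$ to conclude $\mu(1^\emptyset,p)=0$.

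For the second point, observe that a length-$n$ mesh pattern fails to be good precisely when it either contains one of the four forbidden one-box patterns or has $\sh{p}=\emptyset$. By the preceding lemma the proportion of patterns in the first class tends to $0$, while the second class has size $n!=2^{-(n+1)^2}N_n$, a vanishing fraction; hence the proportion of non-good patterns tends to $0$, which completes the argument.

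Both ingredients are already available, so I do not expect any substantial obstacle. The only points requiring a moment's care are the elementary containment $1^{(i,j)}\le 1^B$ for every nonempty $B$ — which is exactly what lets avoidance of the four one-box patterns propagate to avoidance of every nonempty shading of the singleton — and the fact that \cref{lem:mu0} carries the hypothesis $\sh{p}\neq\emptyset$, which is why the (vanishingly few) patterns with no shaded boxes must be set aside and counted separately rather than handled by the lemma.
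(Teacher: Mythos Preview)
Your proof is correct and follows exactly the approach the paper intends: the paper simply writes ``Because of the previous lemma we obtain'' after having announced that \cref{lem:mu0} combined with the preceding lemma shows $\mu(1^\emptyset,p)$ is almost always zero, and your write-up fills in precisely those details. Your care in excluding the patterns with $\sh{p}=\emptyset$ (which are not covered by \cref{lem:mu0} but form a vanishing fraction) is a point the paper leaves implicit.
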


In the classical case it is true that given a permutation $\sigma$ the
probability a permutation of length $n$ contains $\sigma$ tends to $1$ as $n$
tends to infinity, this follows from the Marcus-Tardos Theorem \cite{MT04}. By
the above result we can see the same is not true in the mesh pattern case. In
fact we conjecture the opposite is true:

\begin{conj}
Given a mesh pattern $m$, with at least one shaded box, the probability that a random mesh pattern of length
$n$ contains $m$ tends to~$0$ as $n$ tends to infinity.
\end{conj}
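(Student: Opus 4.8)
Write $p_n=(\pi,P)$ for a uniformly random mesh pattern of length $n$; equivalently $\pi$ is a uniformly random permutation of $[n]$ and each of the $(n+1)^2$ boxes of its grid is shaded independently with probability $\tfrac12$. Fix $m=(\sigma,S)$ with $|S|\ge 1$ and set $k=|\sigma|$. The first reduction is to a single shaded box: if $(i_0,j_0)\in S$ and $\eta$ is an occurrence of $m$ in $p_n$, then by \cref{defn:meshOcc} the same $\eta$ is an occurrence of $\sigma^{(i_0,j_0)}$ in $p_n$, so $\mathbb{P}(m\le p_n)\le\mathbb{P}(\sigma^{(i_0,j_0)}\le p_n)$. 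Hence it suffices to treat mesh patterns with exactly one shaded box, and I assume $m=\sigma^{(i_0,j_0)}$ from now on.

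The plan is a first-moment estimate. Let $N$ count the occurrences of $m$ in $p_n$; then $\mathbb{P}(m\le p_n)\le\mathbb{E}[N]$, and since the shading is independent of $\pi$,
\[
\mathbb{E}[N]=\sum_{\eta}\mathbb{P}_{\pi}\!\left(\eta\text{ occurs for }\sigma^{(i_0,j_0)}\text{ in }\pi\right)\cdot 2^{-\beta(\eta)},
\]
the sum running over $k$-subsets $\eta$ of positions, where $\beta(\eta)$ is the number of boxes of $R_\eta(i_0,j_0)$. Write $\beta(\eta)=W(\eta)\,H(\eta)$, with $W$ the column-span and $H$ the row-span of $R_\eta(i_0,j_0)$, so $\beta(\eta)\ge\max\{W(\eta),H(\eta)\}$; here $W(\eta)$ is a position-gap between the two points of $\eta$ bounding the box (or the distance from a grid boundary to such a point when $i_0\in\{0,k\}$), and $H(\eta)$ is the analogous value-gap. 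If the box lies on the boundary of $\sigma$'s grid, say $i_0=0$ (the cases $i_0=k$ and $j_0\in\{0,k\}$ being symmetric), then $W(\eta)$ equals the position of the leftmost point of $\eta$, so $\beta(\eta)\le B$ forces that point into the first $O(B)$ positions; summing against the weight $2^{-\beta}$ exactly as in the proof that a random mesh pattern almost never contains $1^{(0,0)}$ produces an extra factor $1/n$, and one gets $\mathbb{P}(m\le p_n)\le\mathbb{E}[N]=O(1/n)\to 0$. This also covers corner boxes.

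The remaining case, a \emph{fully interior} box with $1\le i_0\le k-1$ and $1\le j_0\le k-1$, is where I expect the real difficulty and is the main obstacle. Now $R_\eta(i_0,j_0)$ is wedged between two position-consecutive and two value-consecutive points of $\sigma$, and $\beta(\eta)$ can be as small as $1$: whenever $\pi$ contains a \emph{succession}---two adjacent positions carrying two adjacent values in the relative order prescribed by $\sigma$---the rectangle is a single box, automatically point-free, contributing weight $2^{-1}$ to $\mathbb{E}[N]$. Since a uniform permutation of $[n]$ has $\Theta(1)$ successions in expectation (indeed asymptotically $\mathrm{Poisson}(1)$), the first-moment bound stalls at $\mathbb{E}[N]=O(1)$ and no longer forces $\mathbb{P}(m\le p_n)\to 0$. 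To push through, one would have to observe that all occurrences clustering around a fixed succession differ only in how that succession is extended to a full $\sigma$-occurrence---they share one decisive rectangle---so that $\{m\le p_n\}$ is, up to lower-order terms, the event that some succession of the correct type simultaneously has its single box shaded and extends to a $\sigma$-occurrence; and then to extract from the extension and point-free constraints (or, when $|S|>1$, from the remaining boxes of $S$, whose rectangles are typically large) enough extra decay to beat the $\Theta(1)$ count of successions. Making that last step rigorous---or pinning down exactly which interior mesh patterns it covers---is the crux of the conjecture.
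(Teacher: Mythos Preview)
The statement you are attempting is labeled a \emph{conjecture} in the paper, and the paper offers no proof of it; it is presented as an open problem immediately following the special case of the four shaded singletons. So there is no ``paper's own proof'' to compare your proposal against.

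Your proposal is not a proof either, and you say so yourself. The reduction to a single shaded box is valid, and the boundary-box case---where one side of $R_\eta(i_0,j_0)$ is pinned to the grid boundary---does go through by the same geometric-series mechanism as the paper's lemma for $1^{(0,0)}$; this would settle the conjecture for any $m$ whose shading touches the boundary. The genuine gap is exactly the one you isolate: when $(i_0,j_0)$ is strictly interior, the first-moment bound $\mathbb{E}[N]$ is $\Theta(1)$ because of successions (adjacent-position, adjacent-value pairs), which occur with positive limiting probability in a uniform permutation, and each one yields a $1\times 1$ rectangle contributing weight $2^{-1}$. Your suggested fix---arguing that the occurrences cluster around $O(1)$ successions and then extracting extra decay from the remaining constraints---is only a heuristic; you do not show that the extension of a succession to a full $\sigma$-occurrence, together with the point-free and shading constraints, actually forces the probability to $0$. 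In short, your write-up is a correct identification of the obstacle rather than a resolution of it, which is consistent with the statement's status as a conjecture in the paper.
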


\section{Purity}\label{sec:purity}
Recall that a poset is pure (also known as ranked) if all the maximal chains have the same length, and as we
can see from \cref{fig:intEx}, intervals of the mesh pattern poset can be non-pure. In this section we classify
which intervals~$[1^\emptyset,m]$ are non-pure. First we consider the length of the longest maximal chain in
any interval~$[1^\emptyset,m]$, that is, the dimension of $[1^\emptyset,m]$.

\begin{lem}
For any mesh pattern $m$, we have $\dim(1^\emptyset,m)=|\cl{m}|+|\sh{m}|$.
\begin{proof}
We can create a chain from $m$ to $1^\emptyset$ by deshading all boxes, in any order,
and then deleting all but one point, in any order. The length of this chain is $|\cl{m}|+|\sh{m}|$.
Moreover, we cannot create a longer chain because at every
step of a chain we must deshade a box or delete a point.
\end{proof}
\end{lem}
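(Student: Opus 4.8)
The plan is to prove the equality by matching lower and upper bounds: I would exhibit one maximal chain of $[1^\emptyset,m]$ with $|\cl m|+|\sh m|$ elements, and then show that no chain from $1^\emptyset$ to $m$ can have more elements than that.

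For the lower bound, I would build the chain in two stages. To climb from $1^\emptyset$ to $\cl m^\emptyset$: since $[\sigma^\emptyset,\cl m^\emptyset]$ is isomorphic to $[\sigma,\cl m]$ in $\mathcal P$ and deleting a single point is always a cover in the classical permutation poset, any saturated chain $1\lessdot\tau_2\lessdot\cdots\lessdot\cl m$ in $\mathcal P$ transports to a saturated chain of $|\cl m|-1$ covers from $1^\emptyset$ to $\cl m^\emptyset$. To climb from $\cl m^\emptyset$ to $m=\cl m^{\sh m}$: the interval $[\cl m^\emptyset,\cl m^{\sh m}]$ is a boolean lattice, so shading in the boxes of $\sh m$ one at a time is a saturated chain of $|\sh m|$ covers. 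Concatenating the two stages yields a maximal chain of $[1^\emptyset,m]$ with $(|\cl m|-1)+|\sh m|+1=|\cl m|+|\sh m|$ elements, so $\dim(1^\emptyset,m)\ge|\cl m|+|\sh m|$.

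For the upper bound, I would set $r(x)=|\cl x|+|\sh x|$ and show that $r$ is strictly order-preserving on $\mathcal M$, that is, $q<p$ implies $r(q)<r(p)$. If $q\le p$ then $\cl q$ occurs in $\cl p$, so $|\cl q|\le|\cl p|$; I also claim $|\sh q|\le|\sh p|$; and since $q\ne p$, at least one of these two inequalities is strict. Granting this, $r$ is an integer-valued strictly increasing function along any chain from $1^\emptyset$ to $m$, and since $r(1^\emptyset)=1$ and $r(m)=|\cl m|+|\sh m|$, such a chain has at most $|\cl m|+|\sh m|$ elements. This matches the lower bound and proves the claim; it is the precise form of the informal assertion that each step of such a chain deshades a box or deletes a point, and that deleting a point cannot create new shaded boxes on net.

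The step that actually requires the definitions is the inequality $|\sh q|\le|\sh p|$ when $q\le p$. For $\cl q=\cl p$ it is immediate since then $\sh q\subseteq\sh p$, so assume $\cl q\ne\cl p$ and fix an occurrence $\eta$ of $\cl q$ in $\cl p$ witnessing $q\le p$. Every box $(i,j)$ of $\cl q$'s grid corresponds to a region $R_\eta(i,j)$ that contains at least one box of $\cl p$'s grid, because consecutive occurrence points of $\eta$ are separated by at least one grid column, and likewise vertically; distinct boxes of $\cl q$ determine regions spanning disjoint sets of grid columns or of grid rows of $\cl p$, so their associated families of $\cl p$-boxes are pairwise disjoint; and if $(i,j)\in\sh q$ then by \cref{defn:meshOcc} all $\cl p$-boxes inside $R_\eta(i,j)$ lie in $\sh p$. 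Summing over $\sh q$ gives $|\sh q|\le|\sh p|$. I expect this bookkeeping — checking nonemptiness and disjointness of the regions $R_\eta(i,j)$ — to be the main, and essentially the only, obstacle, and it is routine once the picture is drawn.
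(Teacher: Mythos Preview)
Your proposal is correct and follows essentially the same two-step approach as the paper: exhibit an explicit chain for the lower bound (deshade then delete, or equivalently your insert-then-shade), and argue that no chain can be longer for the upper bound. The paper's upper bound is the one-line assertion that ``at every step of a chain we must deshade a box or delete a point''; your rank function $r(x)=|\cl x|+|\sh x|$ and the verification that $|\sh q|\le|\sh p|$ via disjointness of the regions $R_\eta(i,j)$ is exactly the rigorous content behind that sentence, so you are spelling out what the paper leaves implicit rather than doing anything genuinely different.
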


Therefore, we define the \emph{dimension} of a mesh pattern as $\dim(m)=|\cl{m}|+|\sh{m}|$ and we say an
edge $m\lessdot p$ is \emph{impure} if $\dim(p)-\dim(m)>1$.
Next we give a classification of impure edges.

 Let $\mX{m}{x}$ be the mesh pattern obtained by deleting the
point $x$ in $m$ and let $\occX{m}{x}$ be the occurrence of $\mX{m}{x}$ in $m$ that does not
use the point $x$. An occurrence $\eta$ of $m$ in $p$ \emph{uses the shaded box $(a,b)\in\sh{p}$}
if $(a,b)\in R_\eta(i,j)$ for some shaded box $(i,j)\in\sh{m}$. We say that
deleting a point $x$ \emph{merges shadings} if
there is a shaded box in $\mX{m}{x}$ that corresponds to more than one shaded box in
$\occX{m}{x}$, see \cref{fig:impEx}.

\begin{lem}\label{lem:impureEdge}
Two mesh patterns $m<p$ form an impure edge if and only if all occurrences of $m$ in
$p$ use all the shaded boxes of $p$ and are obtained by deleting a point that merges
shadings.
\end{lem}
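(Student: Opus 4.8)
The plan is to prove both directions by analyzing what happens along a covering relation $m \lessdot p$ in terms of the two elementary moves available in $\mathcal M$: deshading a box, and deleting a point. First I would observe that any element strictly below $p$ in a chain must have strictly smaller dimension, so a cover $m \lessdot p$ is impure exactly when $\dim(p) - \dim(m) \ge 2$. Since going from $p$ down to $m$ is witnessed by an occurrence $\eta$ of $m$ in $p$, and since $\eta$ identifies $\cl m$ with a subsequence of $\cl p$ while forcing every box of $R_\eta(i,j)$ (for $(i,j)\in\sh m$) to be shaded in $p$, the dimension drop decomposes as $(|\cl p| - |\cl m|) + (|\sh p| - (\text{number of distinct boxes of }\sh p\text{ used by }\eta))$, together with a correction coming from how the shaded boxes of $\sh m$ ``spread out'' in $p$. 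The first step is to make this accounting precise.

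Next I would handle the ``only if'' direction by contraposition: if $\eta$ fails to use all shaded boxes of $p$, then there is an unused box $(a,b)\in\sh p$, and the intermediate pattern $p'$ obtained from $p$ by deshading exactly $(a,b)$ still has $m \le p' < p$, giving $m \lessdot p$ only if no such $p'$ exists, i.e. a contradiction with coverage unless the drop was already forced to be $1$ — more carefully, I would show that if some shaded box of $p$ is unused, one can insert an intermediate element and $m\lessdot p$ forces $|\sh p\setminus\sh m|\le 0$ on the used boxes, pinning the configuration. Symmetrically, if the point deletion does \emph{not} merge shadings and $\eta$ already uses all boxes, then $|\cl p| = |\cl m| + 1$, no box-count is lost, and no spreading occurs, so $\dim(p)-\dim(m)=1$, i.e. the edge is pure. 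Conversely, for the ``if'' direction I would check that when $\eta$ uses all shaded boxes of $p$ \emph{and} is obtained by deleting a single point $x$ that merges shadings, then $|\cl p| = |\cl m|+1$ but at least one box of $\sh m$ corresponds to two or more boxes of $\sh p$ in $\occX{p}{x}$ (reading $p = $ some extension of $m$ back along the deletion), so $|\sh p| \ge |\sh m| + 1$ and hence $\dim(p) - \dim(m) \ge 2$; I would also need that this is genuinely a cover, which follows because deleting a point is a minimal move and the merging means no single deshading produces an element between $m$ and $p$.

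The main obstacle I expect is the bookkeeping in the ``only if'' direction: showing that \emph{every} occurrence of $m$ in $p$ (not just one convenient one) must use all shaded boxes and come from a point deletion that merges shadings. The subtlety is that a priori the dimension could drop by $2$ through, say, deleting one point and deshading one box in a way that does not correspond to merging; I would rule this out by arguing that if such an intermediate configuration existed it would yield an element strictly between $m$ and $p$ (contradicting the cover), which forces the drop of $2$ to be ``concentrated'' entirely in the point deletion, and the only way a single point deletion drops dimension by more than $1$ is by merging shadings (since a point deletion decreases $|\cl{\cdot}|$ by exactly $1$, the extra decrement must come from $|\sh{\cdot}|$, and the only mechanism consistent with \cref{defn:meshOcc} is that two boxes of $p$ collapse to one box of $m$ under $\occX{m}{x}$, i.e. merging). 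Once that localization is in hand, the remaining verifications are routine case checks against \cref{defn:meshOcc} and the dimension formula $\dim(m) = |\cl m| + |\sh m|$.
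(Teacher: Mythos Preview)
Your approach is essentially the paper's: both directions hinge on the two elementary moves (deshading a box, deleting a point), the dimension formula, and the observation that an unused shaded box or a factorable sequence of moves yields an intermediate element contradicting the cover. Your forward (``only if'') argument matches the paper almost step for step.

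There is one point in the backward (``if'') direction where your justification is incomplete. To show that $m\lessdot p$ is genuinely a cover you must rule out \emph{every} intermediate $z$ with $m<z<p$. You correctly observe that deshading any box of $p$ destroys all occurrences of $m$ (this follows from ``every occurrence uses every shaded box'', not from the merging, as you wrote), which handles the case $\cl z=\cl p$. But the phrase ``deleting a point is a minimal move'' does not dispose of the remaining case $|\cl z|=|\cl p|-1$: here necessarily $\cl z=\cl m$ (since $m\le z$ and the lengths agree), and a priori one could have $\sh m\subsetneq\sh z$ with $z\le p$, giving a genuine intermediate. The paper handles this by noting that the occurrence $\eta$ of such a $z$ in $p$ restricts to an occurrence of $m$ in $p$ on the same point set; the boxes of $\sh p$ lying in $R_\eta(i_0,j_0)$ for any $(i_0,j_0)\in\sh z\setminus\sh m$ are then \emph{not} used by this occurrence of $m$ (the regions $R_\eta(i,j)$ are pairwise disjoint), contradicting the hypothesis that every occurrence uses all shaded boxes. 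Once you insert this argument, your proof is complete and coincides with the paper's.
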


\begin{proof}
First we show the backwards direction. Because $m$ is obtained by deleting a
point that merges shadings, $m$ must have one less point and at least one less
shaded box so $\dim(p)-\dim(m)\ge2$. So it suffices to show that there is no $z$ such
that $m<z<p$. Suppose such a $z$ exists, then if~$z$ is obtained by deshading a
box in $p$ it can no longer contain $m$ because all occurrences of $m$ in $p$
use all the shaded boxes of $p$. If $z$ is obtained by deleting a point and $m<z$,
then $\cl{m}=\cl{z}$. Therefore, we can deshade some boxes
of $z$ to get $m$, which implies there is an occurrence of $m$ in $p$ that
does not use all the shaded boxes of $p$.

Now consider the forward direction. Suppose $m\lessdot p$ is impure, so
$\dim(p)-\dim(m)\ge2$. Therefore, $m$ is obtained by deleting a single point
which merges shadings, but does not delete shadings because any other
combination of deleting points and deshading can be done in successive steps.
Furthermore, this must be true for any point that can be deleted to get $m$,
that is, for all occurrences of $m$ in $p$. Moreover, if there is an occurrence
that does not use all the shaded boxes of $p$, we can deshade the box it doesn't
use and get an element that lies between $m$ and $p$.
\end{proof}

\begin{figure}
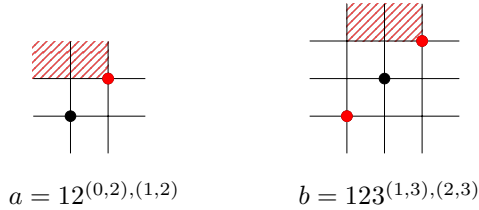
\centering
\begin{subfigure}[b]{0.3\textwidth}
\centering\colpatt{0.5}{2}{1,2}[0/2,1/2][2/2][0/2,1/2]
\caption*{$a=12^{(0,2),(1,2)}$}\label{subfig:a}\end{subfigure}
\begin{subfigure}[b]{0.3\textwidth}
\centering\colpatt{0.5}{3}{1,2,3}[1/3,2/3][1/1,3/3][1/3,2/3]
\caption*{$b=123^{(1,3),(2,3)}$}\label{subfig:b}\end{subfigure}
\caption{Two mesh patterns with a point $x$ in black whose deletion merges shadings and the occurrences
$\occX{a}{x}$ and $\occX{b}{x}$ in red. By \cref{lem:impureEdge} $\mX{a}{x}\lessdot a$ is impure,
but $\mX{b}{x}<b$ is not an impure edge because there is a second occurrence of $\mX{b}{x}$ in $b$, using points $23$,
that does not use all the shaded boxes in $b$.}\label{fig:impEx}
\end{figure}

\begin{lem}\label{lem:topImpure}
If $[m,p]$ contains an impure edge, then it contains an impure edge
$a\lessdot b$ where $\cl{p}=\cl{b}$.
\begin{proof}
Let $x\lessdot y$ be an impure edge in $[m,p]$. So $x$ is obtained from $y$
by deleting a point $i$. Consider an occurrence
$\eta$ of $y$ in $p$ and let $b$ be the mesh pattern where $\cl{b}=\cl{p}$
and $\sh{b}$ are the shaded boxes used by $\eta$. Let $a$ be the mesh pattern obtained from $b$
by deleting the point which corresponds to $i$ in $\eta$.

The mesh pattern $b$ is constructed from $y$ by adding a collection of points.
None of these added points can be touching a shaded box in $b$, as they must be added to empty
boxes of~$y$. Moreover, the set of occurrences of~$a$ in $b$ correspond to the set
of occurrences of $x$ in $y$, after adding the new points. This implies that 
the occurrences of $x$ in $y$ satisfy the conditions of \cref{lem:impureEdge} if and only if the
occurrences of~$a$ in $b$ satisfy the same conditions. So \cref{lem:impureEdge} implies $a\lessdot b$ is an
impure edge.
\end{proof}
\end{lem}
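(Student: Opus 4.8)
The plan is to start with an arbitrary impure edge $x\lessdot y$ inside $[m,p]$ and ``lift'' it to an impure edge whose top element has the same underlying permutation as $p$. By \cref{lem:impureEdge}, the edge $x\lessdot y$ means that $x$ is obtained from $y$ by deleting a single point $i$, that this deletion merges shadings, and that every occurrence of $x$ in $y$ uses all shaded boxes of $y$. First I would fix an occurrence $\eta$ of $y$ in $p$, and define $b$ to be the mesh pattern with $\cl{b}=\cl{p}$ whose shaded set $\sh{b}$ consists of exactly the boxes of $p$ used by $\eta$; then let $a$ be obtained from $b$ by deleting the point that corresponds, under $\eta$, to the deleted point $i$ of $y$. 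By construction $a\lessdot b$ (a single point deletion) and $\cl{b}=\cl{p}$, so it remains only to verify that $a\lessdot b$ is an impure edge, which by \cref{lem:impureEdge} amounts to checking three things: (i) deleting the relevant point from $b$ merges shadings; (ii) every occurrence of $a$ in $b$ uses all shaded boxes of $b$; and (iii) $a\lessdot b$ really is a cover, i.e.\ $\dim(b)-\dim(a)\ge 2$ — but this follows from (i) together with the observation that this point deletion destroys at least one shaded box while not adding any.

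The key structural point, which I would isolate as the heart of the argument, is that $b$ is obtained from $y$ by inserting a collection of extra points, and none of these inserted points can lie in a box that is shaded in $b$: indeed the shaded boxes of $b$ are precisely the images under $\eta$ of the shaded boxes of $y$, and $\eta$ being an occurrence forces those regions of $p$ to be empty except for the points of $\eta$ itself. Consequently the shading pattern of $b$, restricted to the subconfiguration given by $\eta$, is an exact copy of the shading pattern of $y$, and the same holds for $a$ versus $x$. From this I would argue that the occurrences of $a$ in $b$ are in bijection with the occurrences of $x$ in $y$: one direction is restriction to the image of $\eta$, the other is ``re-inserting'' the extra points. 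Under this correspondence, using all shaded boxes and merging shadings are preserved in both directions, because the relevant rectangles $R_\eta(i,j)$ and their shadings transfer verbatim. Hence the conditions of \cref{lem:impureEdge} hold for $x\lessdot y$ if and only if they hold for $a\lessdot b$, and since they hold for the former they hold for the latter.

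The main obstacle I anticipate is making the bijection between occurrences of $a$ in $b$ and occurrences of $x$ in $y$ fully rigorous, rather than merely plausible — one has to be careful that every occurrence of $a$ in $b$ actually lands inside the image of $\eta$ (so that it can be pulled back to $y$), which uses the fact that the added points avoid shaded boxes of $b$ together with the requirement that an occurrence of $a$ must use all shaded boxes of $b$; and conversely that reinserting the extra points never creates a point inside a shaded rectangle of a putative occurrence. I would handle this by arguing region-by-region: for a shaded box $(i,j)$ of $a$, the rectangle $R_{\eta'}(i,j)$ determined by a candidate occurrence $\eta'$ of $a$ in $b$ must be fully shaded in $b$, hence disjoint from the inserted points, hence its relevant corner points all belong to $\eta$, which pins $\eta'$ down to a sub-occurrence coming from $y$. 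Once this bookkeeping is in place, the transfer of the ``merges shadings'' condition is immediate since it is a statement purely about how shaded boxes of $x$ (resp.\ $a$) correspond to shaded boxes in the occurrence $\occX{x}{i}$ (resp.\ $\occX{a}{i'}$), and these correspondences are literally the same diagram.
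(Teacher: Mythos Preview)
Your proposal is correct and follows essentially the same route as the paper's proof: pick an impure edge $x\lessdot y$, lift it via an occurrence $\eta$ of $y$ in $p$ to define $b$ (same underlying permutation as $p$, shading exactly the boxes used by $\eta$) and $a$ (delete the point corresponding to $i$), then argue that the added points of $b$ avoid shaded boxes so that the occurrence structure of $a$ in $b$ matches that of $x$ in $y$, whence \cref{lem:impureEdge} transfers. You in fact go further than the paper by flagging and sketching the verification that every occurrence of $a$ in $b$ pulls back to one of $x$ in $y$; the paper asserts this correspondence without elaboration. One small wording issue: in your third paragraph you momentarily phrase the pullback argument as relying on ``the requirement that an occurrence of $a$ must use all shaded boxes of $b$,'' which is the conclusion you are after, not a hypothesis; your subsequent region-by-region sketch correctly uses instead the definitional fact that $R_{\eta'}(i,j)$ must be fully shaded for each shaded box $(i,j)$ of $a$, which is the right ingredient.
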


\begin{prop}
The interval $[1^\emptyset,m]$ is non-pure if and
only if there exists a point $x$ in $m$ whose deletion merges shadings and
there is no other occurrence of $\mX{m}{x}$ in $m$ which uses a subset of the shadings
used by $\occX{m}{x}$.
\begin{proof}
First we show the backwards direction. Let $t$  be the mesh pattern obtained by
inserting~$x$ back into $\mX{m}{x}$, and $\phi$ the corresponding occurrence of $\mX{m}{x}$
in $t$. Note that there are no other occurrences of $\mX{m}{x}$ in $t$ because there is
no occurrence of $\mX{m}{x}$ in $m$ which uses a subset of the shadings
used by $\occX{m}{x}$. Therefore, by Lemma~\ref{lem:impureEdge} we get that
$\mX{m}{x}\lessdot t$ is an impure edge.

To see the other direction suppose there is an impure edge in $[1^{\emptyset},m]$. By
Lemma~\ref{lem:topImpure} there is an impure edge $a\lessdot b$ where
$\cl{b}=\cl{m}$. By \cref{lem:impureEdge} all occurrences of $a$ in
$b$ use all shaded boxes of $b$ and are obtained by deleting a point that merges
shadings. Moreover, if deleting a point merges shadings in $b$, then its deletion merges shadings in $m$,
which implies the result.
\end{proof}
\end{prop}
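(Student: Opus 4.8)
The plan is to reduce the characterisation of non-purity of $[1^\emptyset,m]$ to the existence of a single impure covering edge somewhere in the interval, and then to use \cref{lem:topImpure} to push that edge to the top layer (where $\cl{b}=\cl{m}$), and finally to translate the hypotheses of \cref{lem:impureEdge} for that top-layer edge into the concrete statement about a point $x$ of $m$ whose deletion merges shadings. The first observation I would record is that $[1^\emptyset,m]$ is non-pure if and only if it contains some impure edge: a bounded finite poset is pure iff every covering relation $a\lessdot b$ in it satisfies $\dim(b)-\dim(a)=1$, since we showed $\dim$ increases by at least $1$ along every cover and the dimension of the whole interval equals $|\cl{m}|+|\sh{m}|$, which is attained by the ``deshade then delete'' chain of that length; a shorter maximal chain can only arise if some cover skips more than one dimension.

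For the backward direction I would take the given point $x$ of $m$, form $t$ by inserting $x$ back into $\mX{m}{x}$ (so $\cl{t}$ is the two-point pattern realising the relative positions of $x$ and whatever point $\occX{m}{x}$ is being thought of around, and $\sh{t}$ records exactly the shaded boxes of $m$ that $\occX{m}{x}$ uses), with $\phi$ the distinguished occurrence of $\mX{m}{x}$ in $t$ omitting $x$. The hypothesis that no other occurrence of $\mX{m}{x}$ in $m$ uses a subset of the shadings used by $\occX{m}{x}$ guarantees $\phi$ is the unique occurrence of $\mX{m}{x}$ in $t$, and it manifestly uses all of $\sh{t}$; deletion of $x$ still merges shadings in $t$ because it does so in $m$ and $t$ carries the same local shading data around $x$. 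Hence \cref{lem:impureEdge} applies to $\mX{m}{x}\lessdot t$, giving an impure edge inside $[1^\emptyset,m]$, so the interval is non-pure.

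For the forward direction, if $[1^\emptyset,m]$ is non-pure it contains an impure edge, and by \cref{lem:topImpure} it contains an impure edge $a\lessdot b$ with $\cl{b}=\cl{m}$. By \cref{lem:impureEdge}, every occurrence of $a$ in $b$ uses all shaded boxes of $b$ and is obtained by deleting a point $x$ that merges shadings; since $\cl{b}=\cl{m}$, this same point $x$ sits in $m$, and because $\sh{b}\subseteq\sh{m}$ with the rectangles $R$ depending only on point positions (which are identical in $b$ and $m$), deleting $x$ merges shadings in $m$ too, and $\mX{m}{x}$ is the pattern underlying $a$ with possibly more shadings. If there were another occurrence of $\mX{m}{x}$ in $m$ using a subset of the shadings used by $\occX{m}{x}$, I would restrict it to the shaded boxes of $b$ to produce a second occurrence of $a$ in $b$ not using all of $\sh{b}$ — or one obtained by deleting a point that does not merge shadings — contradicting \cref{lem:impureEdge} for $a\lessdot b$. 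The main obstacle I anticipate is exactly this last bookkeeping step: making precise the correspondence between occurrences of $\mX{m}{x}$ in $m$ and occurrences of $a$ in $b$ under the passage through $b$ (restricting/forgetting the extra shaded boxes of $m$ not in $\sh{b}$), and checking that ``uses a subset of the shadings'' is the exact condition that survives this translation; the geometric content is routine once one is careful that the rectangles $R_\eta(i,j)$ are determined purely by the chosen points, so I would isolate that correspondence as the technical heart and keep the rest of the argument at the level above.
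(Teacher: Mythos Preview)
Your overall strategy matches the paper's exactly: reduce non-purity of $[1^\emptyset,m]$ to the existence of an impure edge, push that edge to the top layer via \cref{lem:topImpure}, and then read off the condition on $x$ via \cref{lem:impureEdge}. You are in fact more explicit than the paper both in justifying the equivalence ``non-pure $\Leftrightarrow$ contains an impure edge'' for intervals of the form $[1^\emptyset,m]$, and in acknowledging the bookkeeping in the forward direction (the paper's final clause ``which implies the result'' is doing exactly the work you flag as the technical heart).

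There is, however, a genuine slip in your backward direction. You write that $\cl{t}$ is ``the two-point pattern realising the relative positions of $x$ and whatever point $\occX{m}{x}$ is being thought of around''. This is wrong: $\mX{m}{x}$ has $|m|-1$ points, not one, so inserting $x$ back yields $\cl{t}=\cl{m}$, the full underlying permutation of $m$. The shading $\sh{t}$ is then precisely the set of boxes of $\sh{m}$ used by $\occX{m}{x}$, so that $t\le m$ and the distinguished occurrence $\phi$ of $\mX{m}{x}$ in $t$ is literally $\occX{m}{x}$. With this correction your uniqueness argument (``no other occurrence using a subset of those shadings'') applies verbatim, and \cref{lem:impureEdge} gives the impure edge $\mX{m}{x}\lessdot t$ inside $[1^\emptyset,m]$, exactly as in the paper. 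The remainder of your outline is sound.
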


\begin{cor}
There is an impure edge in the interval $[m,p]$ if and only if there exists a
point $x$ in $p$ whose deletion merges shadings and there is no other
occurrence of $\mX{p}{x}$ in $p$ with a subset of shadings of $\occX{p}{x}$, and
$\mX{p}{x}\ge m$.
\end{cor}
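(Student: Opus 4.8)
The plan is to derive this corollary directly from the preceding results, since it is essentially a combination of \cref{lem:topImpure} with a localization argument. First I would show the ``if'' direction: suppose $x$ is a point in $p$ whose deletion merges shadings, there is no other occurrence of $\mX{p}{x}$ in $p$ with a subset of the shadings of $\occX{p}{x}$, and $\mX{p}{x}\ge m$. Let $t$ be the mesh pattern obtained by reinserting $x$ into $\mX{p}{x}$ (equivalently, $t$ has $\cl{t}=\cl{p}$ and shadings exactly those used by $\occX{p}{x}$ together with the boxes that merge into them under deletion of $x$). The condition that there is no other occurrence of $\mX{p}{x}$ in $p$ with a smaller shading set ensures, as in the proof of the Proposition, that $\occX{p}{x}$ is the unique occurrence of $\mX{p}{x}$ in $t$, and it uses all shaded boxes of $t$ by construction. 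Hence \cref{lem:impureEdge} gives that $\mX{p}{x}\lessdot t$ is an impure edge, and since $m\le\mX{p}{x}$ and $t\le p$, this edge lies in $[m,p]$.

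For the ``only if'' direction I would start from an impure edge in $[m,p]$ and apply \cref{lem:topImpure} to obtain an impure edge $a\lessdot b$ with $\cl{b}=\cl{p}$. By \cref{lem:impureEdge}, $a$ is obtained from $b$ by deleting a single point, say at the position corresponding to a point $x$ of $p$ (using $\cl{b}=\cl{p}$), whose deletion merges shadings in $b$; moreover every occurrence of $a$ in $b$ uses all shaded boxes of $b$. As in the last step of the Proposition's proof, if deleting $x$ merges shadings in $b$ then it merges shadings in $p$, since $\sh{b}\subseteq\sh{p}$ and the merged boxes in $b$ are a subset of those in $p$. It remains to check the ``no other occurrence'' condition for $p$: I would argue by contrapositive that an occurrence of $\mX{p}{x}$ in $p$ using a strictly smaller shading set than $\occX{p}{x}$ would restrict to an occurrence of $a$ in $b$ not using all shaded boxes of $b$, contradicting \cref{lem:impureEdge}; this uses the correspondence between occurrences in $b$ and occurrences in $p$ established in the proof of \cref{lem:topImpure}. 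Finally $a\ge m$ forces $\mX{p}{x}\ge m$, since $a$ and $\mX{p}{x}$ have the same underlying permutation $\cl{a}$ and $\sh{a}$ is obtained from $\sh{b}\subseteq\sh{p}$ by the same point-deletion, so $a$ is a deshading of $\mX{p}{x}$ and $m\le a\le\mX{p}{x}$.

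The step I expect to be the main obstacle is making the correspondence between occurrences in the ``ambient'' pattern $p$ and occurrences in the ``minimal witness'' $b$ (or $t$) fully precise, particularly tracking how a shaded box of $p$ can fail to be used by a given occurrence and how that translates to a deshading strictly below $b$. This is exactly the bookkeeping already carried out inside \cref{lem:topImpure} and the Proposition, so I would lean heavily on those arguments rather than redoing them; the corollary is really just a repackaging of the Proposition with the bottom element $1^\emptyset$ replaced by an arbitrary $m$, and the extra hypothesis $\mX{p}{x}\ge m$ is precisely what is needed to guarantee the impure edge stays inside $[m,p]$ rather than merely inside $[1^\emptyset,p]$.
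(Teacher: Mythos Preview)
The paper states this corollary without proof, treating it as an immediate consequence of the preceding Proposition and \cref{lem:topImpure,lem:impureEdge}. Your proposal supplies exactly the argument one would expect: it reruns the Proposition's proof with $1^\emptyset$ replaced by $m$, carrying along the extra condition $\mX{p}{x}\ge m$ to keep the witness edge inside $[m,p]$. This is the intended approach, and your identification of the one delicate point---matching occurrences in $p$ with occurrences in the auxiliary pattern $b$ (or $t$)---is accurate; it is the same bookkeeping the paper already absorbs into the phrase ``which implies the result'' at the end of the Proposition's proof, so you are not skipping anything the paper itself does not also skip.
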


Note that containing an impure edge in $[m,p]$ does not necessarily imply
that $[m,p]$ is non-pure. For example, if $[m,p]$ contains only one edge and
that edge is impure, then $[m,p]$ is still pure. Although it is also possible
to have a pure poset that contains impure and pure edges, see \cref{fig:pureIm}.

\begin{figure}\centering
\begin{tikzpicture}
\def\y{1.35}
\def\x{2}
\def\s{0.25}
\node (2413) at (0*\x,3*\y){\patt{\s}{4}{2,4,1,3}[0/0,1/0,2/0][][][][][4]};
\node (213) at (-1*\x,1.15*\y){\patt{\s}{3}{2,1,3}[0/0,1/0][][][][][4]};
\node (312) at (0*\x,1.15*\y){\patt{\s}{3}{3,1,2}[0/0,1/0][][][][][4]};
\node (231) at (1*\x,1.85*\y){\patt{\s}{3}{2,3,1}[0/0,1/0,2/0][][][][][4]};
\node (21) at (0*\x,0*\y){\patt{\s}{2}{2,1}[0/0,1/0][][][][][4]};
\draw[-] (21) to [bend left=15] (213);
\draw[-] (21) to (312);
\draw[-] (21) to [bend right=15] (231);
\draw[-] (213) to [bend left=15] (2413);
\draw[-] (312) to (2413);
\draw[-] (231) to [bend right=15] (2413);
\end{tikzpicture}
\caption{The interval $[21^{(0,0),(1,0)},2413^{(0,0),(1,0),(2,0)}]$,
 which is pure but contains both pure and impure edges.}
 \label{fig:pureIm}
\end{figure}
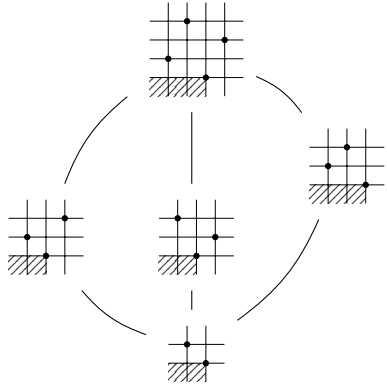

\section{Topology}\label{sec:topology}
A full classification of shellable intervals has not been obtained for the classical permutation
poset, so finding such a classification for the mesh pattern poset would be equally difficult, if not more so.
However, in \cite{McSt13} all disconnected intervals of the permutation poset are described, and containing a disconnected subinterval
implies a pure interval is not shellable. So this gives a large class of non-shellable intervals, in fact
it is shown that almost all intervals are not shellable. We showed in \cref{lem:strongdis} that containing a strongly
disconnected interval implies an interval is not shellable. So in this section we consider
when an interval is strongly disconnected. Firstly we look at the relationship between connectivity
in $\mathcal{P}$ and $\mathcal{M}$.

The connectivity of the interval $[\cl{m},\cl{p}]$ in $\mathcal{P}$ does not
necessarily imply the same property for $[m,p]$ in $\mathcal{M}$. For example,
the interval $[123,456123]$ is disconnected in $\mathcal{P}$ but the interval
\begin{equation}\label{eq:a}\left[\patt{.25}{3}{1,2,3}[3/0,3/1,3/2][][][][][4]
,\patt{.25}{6}{4,5,6,1,2,3}[6/0,6/1,6/2][][][][][4]\right]\end{equation} 
is a chain in $\mathcal{M}$, so is connected. Furthermore, the interval
$[321,521643]$ is connected in $\mathcal{P}$ but the interval
\begin{equation}\label{eq:b}\left[\patt{.25}{3}{3,2,1}[1/3][][][][][4],
\patt{.25}{6}{5,2,1,6,4,3}[1/5,1/6,4/6][][][][][4]\right]\end{equation}
is strongly disconnected in~$\mathcal{M}$. Therefore, if $[\cl{m},\cl{p}]$
is (non-)shellable in $\mathcal{P}$, then it is not true that $[m,p]$ has
the same property in $\mathcal{M}$. For example, $[123,456123]$ is not
shellable but~\eqref{eq:a} is shellable, and $[321,521643]$ is shellable
but \eqref{eq:b} is not shellable.

In \cite{McSt13} the direct sum operation is used to show that almost all
intervals of the permutation poset are not shellable in $\mathcal{P}$. We generalise the direct sum operation to
mesh patterns. Given two permutations $\alpha=\alpha_1\ldots\alpha_a$
and~$\beta=\beta_1\ldots\beta_b$ the direct sum of the two is defined as
$\alpha\oplus\beta=\alpha_1\ldots\alpha_a(\beta_1+a)(\beta_2+a)\ldots(\beta_b+a)$,
that is, we increase the value of each letter of $\beta$ by the
length of $\alpha$ and append it to $\alpha$. This can also be thought of in terms of the plots of $\alpha$
and $\beta$ by placing a copy of $\beta$ to the north east of $\alpha$.
Similarly we can define the skew-sum $\alpha\ominus\beta$ by
prepending $\alpha$ to $\beta$ and increasing the value of each letter of $\alpha$
by the length of $\beta$. We extend these definitions to mesh patterns in the following way:

\begin{defn}\label{defn:directsum}
Consider two mesh patterns $s$ and $t$, where the top right corner of $s$
and bottom left corner of $t$ are not shaded. The direct sum~$s\oplus t$ has the classical pattern
$\cl{s}\oplus\cl{t}$ and shaded boxes $\sh{s}\cup\{(i+|\cl{s}|,j+|\cl{s}|)\,|\,(i,j)\in\sh{t}\}$,
and also for any shaded boxes $(i,|\cl{s}|)$, $(|\cl{s}|,i)$, $(j,|\cl{s}|)$ or $(|\cl{s}|,j)$,
shaded all the boxes north, east, south or west of the box, respectively,
for all $0\le i< |\cl{s}|$ and $|\cl{s}|< j\le |\cl{s}|+|\cl{t}|$.  We similarly
define the skew-sum for when the bottom right corner of $s$ and top left corner of $t$ are not shaded.
\end{defn}

The direct product $s\oplus t$ can be consider as placing a copy of $t$
north east of $s$ and any shaded box that was on a boundary we extend to the new boundary,
see \cref{fig:directsum}. We define the direct sum in this way because it maintains one of the most
important properties in the permutation sense, that the first $|\cl{s}|$ letters are
an occurrence of $s$ and the final $|\cl{t}|$ letters are an occurrence of $t$.

A permutation is said to be indecomposable if it cannot be written as the direct
sum of smaller permutations. We generalise this to mesh patterns.
\begin{defn}
A mesh pattern $m$ is \emph{indecomposable} (resp. \emph{skew-\linebreak indecomposable}) if it
cannot be written $m=a\oplus b$ (resp. $m=a\ominus b$), where neither $a$ nor $b$ is $m$.
\end{defn}
\begin{rem}
It is well known that a permutation has a unique decomposition into indecomposable permutations.
This implies that a mesh pattern also has a unique decomposition.
\end{rem}

Using these definitions we can give a large class of strongly disconnected intervals,
which is a mesh pattern generalisation of Lemma 4.2 in \cite{McSt13}.

\begin{figure}
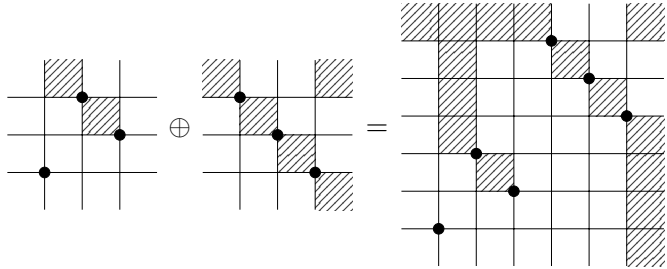
\centering
$\patt{.5}{3}{1,3,2}[1/3,2/2]\oplus\patt{.5}{3}{3,2,1}[0/3,1/2,2/1,3/3,3/0][][][][][4]
=\patt{.5}{6}{1,3,2,6,5,4}[1/3,2/2,3/6,4/5,5/4,6/6,6/3,1/4,1/5,1/6,0/6,2/6,6/0,6/1,6/2][][][][][4]$
\caption{The direct sum of two mesh patterns.}\label{fig:directsum}
\end{figure}

\begin{lem}
If $m$ is indecomposable, $\dim m > 1$ and $(0,0),(|m|,|m|)\not\in\sh{m}$, then
$[m,m\oplus m]$ is strongly disconnected.
\begin{proof}
By Lemma 4.2 in \cite{McSt13} the interval $[\cl{m},\cl{m}\oplus \cl{m}]$ is strongly disconnected,
with components $P_1=\{\cl{m}\oplus x\,|\,x\in [1,\cl{m})\}$ and
$P_2=\{x\oplus \cl{m}\,|\,x\in [1,\cl{m})\}$. Consider any pair $\alpha,\beta\in[m,m\oplus m]$,
if $\cl{\alpha}$ and~$\cl{\beta}$ are not in the same component of  $[\cl{m},\cl{m}\oplus \cl{m}]$,
then $\alpha$ and $\beta$ are incomparable. Let $\hat{P_1}=\{\alpha\,|\,\cl{\alpha}\in P_1\}$
and $\hat{P_2}=\{\alpha\,|\,\cl{\alpha}\in P_2\}$. However, $\hat{P_1}\cup\hat{P_2}\not=(\cl{m},\cl{m\oplus m})$
because it does not include the mesh patterns $\alpha$ with ${\cl{\alpha}=\cl{m}\oplus \cl{m}}$.

There are exactly two occurrences of $m$ in $m\oplus m$. These are $\eta_1$ the first~$|m|$ letters
and $\eta_2$ the last $|m|$ letters. Note that each shaded box of~${m\oplus m}$ is used by at least one
of $\eta_1$ and $\eta_2$, so if we deshade a box the resulting pattern $x$ contains at most one occurrence of $m$,
either the first or last $|m|$ letters. Let $Q_1$ and $Q_2$ be sets of patterns with underlying permutation
$\cl{m}\oplus \cl{m}$ where the first and last $|m|$ letters are the only occurrence of $m$, respectively. So any
element $Q_1$ cannot contain an element in $P_2\cup Q_2$ and similarly any element of $Q_2$ cannot
contain an element of ${P_1\cup Q_1}$. Therefore, $P_1\cup Q_1$ and $P_2\cup Q_2$ are disconnected
nontrivial components of $[m,m\oplus m]$.
\end{proof}
\end{lem}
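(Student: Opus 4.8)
The plan is to lift the strong disconnectedness of $[\cl{m},\cl{m}\oplus\cl{m}]$ in $\mathcal{P}$ (Lemma 4.2 of \cite{McSt13}) to $[m,m\oplus m]$ in $\mathcal{M}$, taking account of the extra mesh patterns whose underlying permutation is exactly $\cl{m}\oplus\cl{m}$. First I would record the structure of the interval in $\mathcal{P}$: every element $z$ of $(\cl{m},\cl{m}\oplus\cl{m})$ either sits strictly below $\cl{m}\oplus\cl{m}$ as an underlying permutation, in which case it lies in one of the two components $P_1=\{\cl{m}\oplus x\}$, $P_2=\{x\oplus\cl{m}\}$ (and these two sets are pairwise incomparable), or has $\cl{z}=\cl{m}\oplus\cl{m}$. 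Correspondingly, pull these back: set $\hat P_i=\{\alpha\in(m,m\oplus m)\mid\cl\alpha\in P_i\}$. Any $\alpha\in\hat P_1$ and $\beta\in\hat P_2$ are incomparable in $\mathcal{M}$ since already $\cl\alpha\not\le_\mathcal{P}\cl\beta$ and $\cl\beta\not\le_\mathcal{P}\cl\alpha$. The work is to split the remaining mesh patterns—those with underlying permutation $\cl{m}\oplus\cl{m}$—between the two would-be components and check incomparability.

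For this I would use that there are exactly two occurrences of $m$ in $m\oplus m$, namely $\eta_1$ on the first $|m|$ letters and $\eta_2$ on the last $|m|$ letters, and—this is the key combinatorial point that needs the hypotheses $(0,0),(|m|,|m|)\notin\sh m$ and $m$ indecomposable with $\dim m>1$—every shaded box of $m\oplus m$ is used by at least one of $\eta_1,\eta_2$. Granting that, any $x$ with $\cl x=\cl{m}\oplus\cl{m}$ and at least one box deshaded relative to $m\oplus m$ loses at least one of the two occurrences, so it contains $m$ only via $\eta_1$ or only via $\eta_2$ (and $x=m\oplus m$ itself is the top, excluded from the interior). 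Let $Q_i$ be the set of such $x$ whose sole occurrence of $m$ is $\eta_i$; then $(m,m\oplus m)=\hat P_1\cup\hat P_2\cup Q_1\cup Q_2$, and I would argue no element of $Q_1$ lies above or below any element of $P_2\cup Q_2$: an element of $P_2$ has underlying permutation $x\oplus\cl m$ with $x$ a proper pattern of $\cl m$, so an order relation with an element of $Q_1$ would force, on underlying permutations, a relation across the two components of the $\mathcal{P}$-interval, which is impossible; and an order relation between $Q_1$ and $Q_2$ would have to be an equality of underlying permutations plus a shading comparison, but then the occurrence of $m$ in the smaller one could be transported to the larger one via the shaded boxes, contradicting that the larger one's unique occurrence is on the opposite side. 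Hence $A:=P_1\cup Q_1$ (with hats implicit) and $B:=P_2\cup Q_2$ partition the interior into two pairwise-incomparable pieces, each nontrivial because $\dim m>1$ forces $[1,\cl m)$ and hence $\hat P_1,\hat P_2$ to be nonempty, so the interval is strongly disconnected.

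The main obstacle is proving the claim that \emph{every} shaded box of $m\oplus m$ is used by $\eta_1$ or $\eta_2$. By construction the shaded boxes of $m\oplus m$ are those coming from $\sh m$ (appearing twice, once in each copy, and used by the corresponding $\eta_i$) together with the boxes added along the new internal boundary at coordinate $|\cl m|$; I would check case by case, using $(0,0),(|m|,|m|)\notin\sh m$ to guarantee the four corner boxes of each copy are empty so the boundary-extension in \cref{defn:directsum} does not create boxes that sit "in the middle" of a region inaccessible to either occurrence, and using indecomposability to rule out degenerate configurations. A secondary point to nail down carefully is that the $\mathcal{P}$-components $P_1,P_2$ really do, after lifting, carry over their incomparability and nontriviality—this is immediate from the statement of Lemma 4.2 of \cite{McSt13} together with $\dim m>1$, but I would spell out that $\dim m>1$ is exactly what keeps $[1,\cl m)$ (equivalently $|\cl m|\ge 2$, or $|\cl m|=1$ with a shaded box, but the indecomposability plus $\dim m>1$ handles the length-one case) nonempty.
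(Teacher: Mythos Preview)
Your proposal is essentially the same argument as the paper's: lift the two components $P_1,P_2$ of $[\cl m,\cl m\oplus\cl m]$ from \cite{McSt13} to $\hat P_1,\hat P_2$, then split the remaining mesh patterns with underlying permutation $\cl m\oplus\cl m$ into $Q_1,Q_2$ according to which of the two occurrences $\eta_1,\eta_2$ of $m$ survives, and conclude that $\hat P_1\cup Q_1$ and $\hat P_2\cup Q_2$ are the two nontrivial components. You are in fact more explicit than the paper about the two points it leaves implicit---why every shaded box of $m\oplus m$ is used by $\eta_1$ or $\eta_2$, and why elements of $Q_1$ and $Q_2$ are mutually incomparable---so there is nothing to add on the level of strategy.
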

\begin{cor}
If $m$ is skew-indecomposable, $(|m|,0),(0,|m|) \not\in\sh{m}$ and $\dim m>1$, then
$[m,m\ominus m]$ is strongly disconnected.
\end{cor}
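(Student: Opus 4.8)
The plan is to deduce the skew-indecomposable statement directly from the direct-sum version by applying a symmetry of the mesh pattern poset. First I would observe that the reverse (or complement) operation on mesh patterns — reflecting the grid about a vertical axis, say, sending $\cl{m}$ to its reverse and reflecting every shaded box accordingly — is an automorphism of $\mathcal{M}$. Indeed, reflecting a permutation and all associated shadings preserves the notion of occurrence in \cref{defn:meshOcc}, since rectangles $R_\eta(i,j)$ are carried to rectangles, points to points, and ``is shaded'' to ``is shaded''. Hence this reflection is an order-isomorphism $\mathcal{M}\to\mathcal{M}$, and it carries any interval $[a,b]$ isomorphically to the interval between the reflected patterns.

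Next I would check how this reflection interacts with $\oplus$ and $\ominus$. Reflecting about a vertical axis turns $\alpha\oplus\beta$ into $\beta'\ominus\alpha'$, where $\alpha'$, $\beta'$ denote the reflections of $\alpha$, $\beta$: placing a copy of $\cl{t}$ to the northeast of $\cl{s}$ becomes placing a copy of the reflected $\cl{t}$ to the northwest of the reflected $\cl{s}$, which is exactly the skew-sum in the opposite order. The boundary-shading conventions in \cref{defn:directsum} are likewise symmetric under this reflection: a shaded box on a given boundary of $s$ or $t$ and its induced extension are sent to a shaded box on the mirrored boundary with the mirrored extension. In particular $[m, m\oplus m]$ is carried to $[m', m'\ominus m']$. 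The remaining point is that indecomposability and the corner conditions transfer: $m$ is indecomposable if and only if $m'$ is skew-indecomposable, $\dim m = \dim m'$, and the condition $(0,0),(|m|,|m|)\notin\sh{m}$ on the bottom-left and top-right corners of $m$ becomes exactly the condition $(|m'|,0),(0,|m'|)\notin\sh{m'}$ on the bottom-right and top-left corners of $m'$ (or some such matching; one simply records which corner goes where under the chosen reflection).

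Putting this together: given a skew-indecomposable $m$ with $(|m|,0),(0,|m|)\notin\sh{m}$ and $\dim m>1$, let $m'$ be its reflection. Then $m'$ is indecomposable, $\dim m'>1$, and $(0,0),(|m'|,|m'|)\notin\sh{m'}$, so by the Lemma the interval $[m', m'\oplus m']$ is strongly disconnected. Applying the reflection automorphism back, $[m', m'\oplus m']\cong [m, m\ominus m]$, and strong disconnectivity — being an isomorphism-invariant property of intervals — is preserved. Hence $[m,m\ominus m]$ is strongly disconnected, as claimed.

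The main thing to be careful about is the bookkeeping in the second paragraph: one must fix a single reflection (reverse, complement, or inverse — reverse about a vertical axis is the natural choice here) and verify honestly that under that choice $\oplus$ becomes $\ominus$ with the arguments swapped \emph{and} that the corner/shading conventions in \cref{defn:directsum} are respected, since the direct sum is only defined when certain corners are unshaded. None of this is deep, but it is the step where an off-by-one in which corner maps to which corner could creep in; everything else is an immediate transfer of properties along an isomorphism.
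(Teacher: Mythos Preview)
Your argument is correct: the reverse automorphism of $\mathcal{M}$ exchanges $\oplus$ and $\ominus$, swaps indecomposable with skew-indecomposable, and sends the corner condition $(0,0),(|m|,|m|)\notin\sh{m}$ precisely to $(|m|,0),(0,|m|)\notin\sh{m}$, so the corollary follows from the preceding lemma by transport along this isomorphism. The paper gives no proof at all for this corollary, treating it as immediate from the lemma; your symmetry argument is exactly the kind of one-line justification intended.
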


Using Lemma 4.2 in \cite{McSt13} it is shown that
almost all intervals of the classical permutation poset are not shellable. The
proof of this follows from the Marcus-Tardos theorem. We have seen this result
does not apply in the mesh pattern case, so we cannot prove a similar result
using this technique.  A similar problem was studied for boxed mesh patterns in
permutations in~\cite{AKV13}, which is equivalent to boxed mesh patterns in
fully shaded mesh patterns. So we present the following open question:

\begin{que}
What proportion of intervals of $\mathcal{M}$ are shellable?
\end{que}

The M\"obius function in the permutation poset can be computed more easily by decomposing the
permutations into smaller parts using the direct sum, or skew-sum, see \cite{BJJS11,McSt13}. Which
leads to the following question:
\begin{que}
Can a formula for the M\"obius function of $\mathcal{M}$ be obtained by decomposing mesh patterns
using direct sums and skew sums?
\end{que}

%\bibliographystyle{alpha}
%\bibliography{bibfile}

\begin{thebibliography}{BJJS11}

\bibitem[AKV13]{AKV13}
Sergey Avgustinovich, Sergey Kitaev, and Alexandr Valyuzhenich.
\newblock Avoidance of boxed mesh patterns on permutations.
\newblock {\em Discrete Applied Mathematics}, 161(1–2):43 -- 51, 2013.

\bibitem[BC11]{Bra11}
Petter Br{\"a}nd{\'e}n and Anders Claesson.
\newblock Mesh patterns and the expansion of permutation statistics as sums of
  permutation patterns.
\newblock {\em Electron. J. Combin}, 18(2):P5, 2011.

\bibitem[BJJS11]{BJJS11}
Alexander Burstein, V{\'{\i}}t Jel{\'{\i}}nek, Eva Jel{\'{\i}}nkov{\'a}, and
  Einar Steingr{\'{\i}}msson.
\newblock The {M}\"obius function of separable and decomposable permutations.
\newblock {\em Journal of Combinatorial Theory. Series A}, 118(8):2346--2364,
  2011.

\bibitem[Bj{\"o}90]{Bjo90}
Anders Bj{\"o}rner.
\newblock The {M}{\"o}bius function of subword order.
\newblock {\em Institute for Mathematics and its Applications}, 19:118, 1990.

\bibitem[CTU15]{CTU15}
Anders Claesson, Bridget~Eileen Tenner, and Henning Ulfarsson.
\newblock {Coincidence among families of mesh patterns}.
\newblock {\em The Australasian Journal of Combinatorics}, 63:88--106, 2015.

\bibitem[Hat02]{Hat02}
Allen Hatcher.
\newblock {\em Algebraic Topology}.
\newblock Cambridge University Press, 2002.

\bibitem[JKR15]{JKR15}
Miles Jones, Sergey Kitaev, and Jeffrey Remmel.
\newblock {Frame patterns in n-cycles}.
\newblock {\em Discrete Mathematics}, 338(7):1197--1215, July 2015.

\bibitem[MS15]{McSt13}
Peter R.~W. McNamara and Einar Steingr{\'\i}msson.
\newblock On the topology of the permutation pattern poset.
\newblock {\em Journal of Combinatorial Theory, Series A}, 134:1--35, 2015.

\bibitem[MT04]{MT04}
Adam Marcus and G{\'a}bor Tardos.
\newblock Excluded permutation matrices and the {S}tanley--{W}ilf conjecture.
\newblock {\em Journal of Combinatorial Theory, Series A}, 107(1):153--160,
  2004.

\bibitem[Qui78]{Quillen78}
Daniel Quillen.
\newblock {Homotopy properties of the poset of nontrivial p-subgroups of a
  group}.
\newblock {\em Advances in Mathematics}, 28(2):101--128, 1978.

\bibitem[Smi14]{Smith13}
Jason~P. Smith.
\newblock On the {M}\"obius function of permutations with one descent.
\newblock {\em The Electronic Journal of Combinatorics}, 21:2.11, 2014.

\bibitem[Smi16]{Smith14}
Jason~P. Smith.
\newblock Intervals of permutations with a fixed number of descents are
  shellable.
\newblock {\em Discrete Mathematics}, 339(1):118 -- 126, 2016.

\bibitem[Smi17]{Smith15}
Jason~P. Smith.
\newblock A formula for the {M}\"obius function of the permutation poset based
  on a topological decomposition.
\newblock {\em Advances in Applied Mathematics}, 91:98 -- 114, 2017.

\bibitem[TU18]{TU17}
Murray Tannock and Henning Ulfarsson.
\newblock Equivalence classes of mesh patterns with a dominating pattern.
\newblock {\em Discrete Mathematics and Theoretical Computer Science}, Special
  issue for Permutation Patterns 2016, vol. 19, 2018.

\bibitem[Wac07]{Wac07}
Michelle~L. Wachs.
\newblock Poset topology: {T}ools and applications.
\newblock In {\em Geometric Combinatorics}, volume~13 of {\em IAS/Park City
  Math. Ser.}, pages 497--615. Amer. Math. Soc., 2007.

\end{thebibliography}
\section*{\refname}

\end{document}